\documentclass[12pt]{amsart}
\usepackage{amssymb,amsmath,amsthm,mathrsfs,multirow,xcolor,framed,url}
\usepackage[pdftex,
         pdfauthor={Dandan Chen, Rong Chen and Mengjie Zhao},
         pdftitle={Combinatorial proof},
         pdfsubject={MATHEMATICS},
         pdfkeywords={Total number of ones, Crank for partitions, Theta-functions.},
         pdfproducer={Latex with hyperref},
         pdfcreator={pdflatex}]{hyperref}
\usepackage [latin1]{inputenc}
\oddsidemargin = 0.0cm
\evensidemargin = 0.0cm
\textwidth = 6.5in
\textheight =8.0in

\newtheorem{theorem}{Theorem}[section]
\newtheorem{lemma}[theorem]{Lemma}
\newtheorem{cor}[theorem]{Corollary}

\theoremstyle{definition}

\theoremstyle{remark}

\numberwithin{equation}{section}

\newcommand\nutwid{\overset {\text{\lower 3pt\hbox{$\sim$}}}\nu}











\allowdisplaybreaks

\newcommand\omycite[1]{}






\newcommand{\beqs}{\begin{equation*}}
\newcommand{\eeqs}{\end{equation*}}
\newcommand{\beq}{\begin{equation}}
\newcommand{\eeq}{\end{equation}}



\begin{document}
\title[Combinatorial proofs]{Combinatorial proofs for partitions with repeated smallest part}


\author{Dandan Chen}
\address{Department of Mathematics, Shanghai University, People's Republic of China}
\address{Newtouch Center for Mathematics of Shanghai University, Shanghai, People's Republic of China}
\email{mathcdd@shu.edu.cn}
\author{Rong Chen}
\address{Department of Mathematics, Shanghai Normal University, Shanghai, China}
\email{rchen@shnu.edu.cn}
\author{Mengjie Zhao}
\address{Department of Mathematics, Shanghai University, People's Republic of China}
\email{zmjj@shu.edu.cn}


\subjclass[2010]{05A17, 11P87}

\date{}


\keywords{q-series; Integer partitions }

\begin{abstract}

Recently, Andrews and El Bachraoui considered the number of integer partitions whose smallest part is repeated exactly $k$ times and the remaining parts are not repeated. They presented several interesting results and  posed questions regarding combinatorial proofs for these identities. In this paper, we establish bijections to provide  combinatorial proofs for these results.

\end{abstract}

\maketitle


\section{Introduction}


 Recently, Andrews and El Bachraoui \cite[Defination 1]{AB-JMAA-25} defined Spt$k_d(n)$ as the set of partitions $\pi$ of $n$ where the smallest part $s(\pi)$ occurs exactly $k$ times and the remaining parts do not repeat and spt$k_d(n)$ as the cardinality of Spt$k_d(n)$.
They denoted $A_{0}(k,n)(resp.,A_{1}(k,n)$ as the number of partitions $\pi$ in the set Spt$k_d(n)$ where the number of parts that are greater than $s(\pi)$ is even (resp., odd) and denoted spt$k'_d(n)$=$A_{0}(k,n)-A_{1}(k,n)$.
Then clearly,
\begin{align*}
\sum_{n\geq1}sptk_d(n)q^n=\sum_{n\geq1}q^{kn}(-q^{n+1};q)_\infty~\text{and}~
\sum_{n\geq1}sptk^{'}_d(n)q^n=\sum_{n\geq1}q^{kn}(q^{n+1};q)_\infty.
\end{align*}
Here and throughout this paper, we adopt the notations \cite{Ga+Ra-04}
 $$(a;q)_0=1, (a;q)_n=\prod_{j=0}^{n-1}(1-aq^j), (a;q)_{\infty}=\prod_{j=0}^{\infty}(1-aq^j)~ for ~|q|<1. $$

\begin{theorem}\cite[Theorem 1]{AB-JMAA-25}\label{thm1}
  For any positive integer $k$ we have
 $$\sum_{n\ge 1}sptk_{d}(n)q^n=P_{k}(q)(-q;q)_{\infty}+(-1)^k(q;q)_{k-1},$$

where
$$P_{k}(q)=\begin{cases}
               1  & \mbox{for }k=1, \\
               (q^{k-1}-1)P_{k-1}(q)+q^{k-1}  & \mbox{for }k>1.
             \end{cases} $$

\end{theorem}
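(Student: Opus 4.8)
The plan is to establish the identity by induction on $k$. Writing $L_k(q):=\sum_{n\ge 1}\mathrm{spt}k_d(n)q^n=\sum_{n\ge1}q^{kn}(-q^{n+1};q)_\infty$ for the left-hand side and $R_k(q):=P_k(q)(-q;q)_\infty+(-1)^k(q;q)_{k-1}$ for the proposed right-hand side, I would show that both sequences satisfy the same first-order recurrence in $k$ and agree at $k=1$.

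For the base case, note that $(-q^{n+1};q)_\infty$ is the generating function for partitions into distinct parts all exceeding $n$; summing $q^n(-q^{n+1};q)_\infty$ over $n\ge 1$ therefore enumerates every nonempty partition into distinct parts according to its smallest part $n$. Hence $L_1(q)=(-q;q)_\infty-1$, which equals $R_1(q)=1\cdot(-q;q)_\infty+(-1)(q;q)_0$.

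The heart of the argument is the companion recurrence
$$L_k(q)=(q^{k-1}-1)L_{k-1}(q)+q^{k-1}(-q;q)_\infty,\qquad k\ge2.$$
To prove it I would start from $q^{k-1}L_{k-1}(q)=\sum_{n\ge1}q^{(k-1)(n+1)}(-q^{n+1};q)_\infty$, reindex by $m=n+1$ to obtain $\sum_{m\ge2}q^{(k-1)m}(-q^m;q)_\infty$, and then apply the splitting $(-q^m;q)_\infty=(1+q^m)(-q^{m+1};q)_\infty$. This breaks the sum into a shifted copy of $L_{k-1}$ and a shifted copy of $L_k$, each missing only its $m=1$ term; collecting the boundary contributions gives $L_k-(q^{k-1}-1)L_{k-1}=(q^{k-1}+q^k)(-q^2;q)_\infty$, and the right side simplifies to $q^{k-1}(-q;q)_\infty$ via $(1+q)(-q^2;q)_\infty=(-q;q)_\infty$.

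Finally I would verify that $R_k$ obeys the identical recurrence. Substituting $P_k=(q^{k-1}-1)P_{k-1}+q^{k-1}$ into $R_k$ and writing $P_{k-1}(-q;q)_\infty=R_{k-1}-(-1)^{k-1}(q;q)_{k-2}$ produces $(q^{k-1}-1)R_{k-1}+q^{k-1}(-q;q)_\infty$ together with two residual terms proportional to $(q^{k-1}-1)(q;q)_{k-2}$; using $(q;q)_{k-1}=(1-q^{k-1})(q;q)_{k-2}$ these combine with a factor $(-1)^{k+1}+(-1)^k=0$ and vanish. Matching recurrences and base cases then yields $L_k=R_k$ for all $k$ by induction. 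I expect the main obstacle to be the careful bookkeeping of the $m=1$ boundary term in the reindexing step and the sign cancellation in the $R_k$ computation; everything else is routine $q$-series manipulation. A fully bijective rendering would additionally require interpreting the negative coefficient $q^{k-1}-1$ combinatorially, which is why the inductive generating-function route seems cleanest for this particular statement.
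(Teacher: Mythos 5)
Your proof is correct, but it is not the route the paper takes. You derive the key recurrence $L_k(q)=(q^{k-1}-1)L_{k-1}(q)+q^{k-1}(-q;q)_\infty$ analytically, by reindexing $q^{k-1}L_{k-1}(q)=\sum_{m\ge 2}q^{(k-1)m}(-q^m;q)_\infty$ and splitting $(-q^m;q)_\infty=(1+q^m)(-q^{m+1};q)_\infty$; your bookkeeping of the $m=1$ boundary terms and the simplification $(q^{k-1}+q^k)(-q^2;q)_\infty=q^{k-1}(-q;q)_\infty$ both check out, as does the verification that $R_k$ satisfies the same recurrence via the cancellation $(-1)^{k+1}+(-1)^k=0$. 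The paper instead obtains the \emph{same} recurrence from a combinatorial identity, $\mathrm{spt}k_d(n)+\mathrm{spt}(k-1)_d(n)=\mathrm{spt}(k-1)_d(n-k+1)+p_d(n-k+1)$ (its Theorem \ref{thm1-1}), proved by an explicit bijection that removes one unit from each of the first $k-1$ copies of the smallest part; note how placing $\mathrm{spt}(k-1)_d(n)$ on the left turns your ``negative coefficient $q^{k-1}-1$'' obstruction into a positivity statement amenable to a bijection. After that, the two arguments coincide: both sum the recurrence against $q^n$, handle the single boundary term $\mathrm{spt}(k-1)_d(k-1)=1$, and close the induction identically. Your version is shorter and entirely self-contained $q$-series manipulation, essentially in the spirit of the original Andrews--El Bachraoui proof; the paper's version is longer but delivers the combinatorial explanation that is its stated purpose, so your approach proves the theorem while deliberately sidestepping the feature the paper is about.
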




Andrews and El Bachraoui \cite[Defination 2]{AB-JMAA-25} also defined Spt$k_{do}(n)$ as the set of partitions $\pi$ of $n$ where the smallest part $s(\pi)$ occurs exactly $k$ times, and the remaining parts are distinct and incongruent modulo 2 with $s(\pi)$.  The cardinality of this set is denoted by spt$k_{do}(n)$. They further defined $B_{0}(k,n)(resp.,B_{1}(k,n)$ as the number of partitions $\pi$ in the set Spt$k_{do}(n)$ where the number of parts greater than $s(\pi)$ is even (resp., odd), and they denoted spt$k'_{do}(n)$=$B_{0}(k,n)-B_{1}(k,n)$.
Then clearly,
\begin{align*}
\sum_{n\geq1}sptk_{do}(n)q^n=\sum_{n\geq1}q^{kn}(-q^{n+1};q^2)_\infty~\text{and}~
\sum_{n\geq1}sptk^{'}_{do}(n)q^n=\sum_{n\geq1}q^{kn}(q^{n+1};q^2)_\infty.
\end{align*}
\begin{theorem}\cite[Theorem 3]{AB-JMAA-25}\label{thm2}
  For any positive integer k we have

 $$ \sum_{n\ge 1}sptk'_{do}(n)q^n=T_{k}(q)(q;q^2)_{\infty}+2q^k(q^2;q^2)_{k-1},$$
where
$$T_{k}(q)=\begin{cases}
               -q  & \mbox{for }k=1, \\
               (q-q^{2k-1})T_{k-1}(q)-q^{2k-1}  & \mbox{for }k>1.
             \end{cases} $$
\end{theorem}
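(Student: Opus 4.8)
The plan is to recast the statement as a first-order recurrence and prove it by induction on $k$. Writing $S_k(q)$ for the left-hand series, the excerpt already records
\[ S_k(q):=\sum_{n\ge 1}sptk'_{do}(n)q^n=\sum_{n\ge 1}q^{kn}(q^{n+1};q^2)_\infty, \]
so I must show $S_k=T_k(q)(q;q^2)_\infty+2q^k(q^2;q^2)_{k-1}$. The key preliminary observation is that the \emph{proposed closed form} already satisfies the recurrence $S_k=(q-q^{2k-1})S_{k-1}-q^{2k-1}(q;q^2)_\infty$: feeding in the recurrence for $T_k$ and the inductive formula for $S_{k-1}$, the theta-free terms reappear because $(q-q^{2k-1})\cdot 2q^{k-1}(q^2;q^2)_{k-2}=2q^k(1-q^{2k-2})(q^2;q^2)_{k-2}=2q^k(q^2;q^2)_{k-1}$, while the $(q;q^2)_\infty$-coefficient becomes $(q-q^{2k-1})T_{k-1}-q^{2k-1}=T_k$. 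Hence it suffices to prove that the \emph{series} $S_k$ obeys this same recurrence, together with the base case $S_1=-q(q;q^2)_\infty+2q$.

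For the recurrence I would form $S_k-qS_{k-1}=\sum_{n\ge1}q^{(k-1)n}(q^n-q)(q^{n+1};q^2)_\infty$ and collapse the summand using the two elementary identities $q^n-q=-q(1-q^{n-1})$ and $(1-q^{n-1})(q^{n+1};q^2)_\infty=(q^{n-1};q^2)_\infty$, which turn it into $-q\,q^{(k-1)n}(q^{n-1};q^2)_\infty$. Reindexing $m=n-1$ and discarding the vanishing $m=0$ term (since $(1;q^2)_\infty=0$) gives $S_k-qS_{k-1}=-q^kW_{k-1}$, where $W_j:=\sum_{m\ge1}q^{jm}(q^m;q^2)_\infty$. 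The second ingredient is the shift identity $W_j=q^jS_j+q^j(q;q^2)_\infty$, which is immediate: $q^jS_j=\sum_{m\ge1}q^{j(m+1)}(q^{m+1};q^2)_\infty=\sum_{m\ge2}q^{jm}(q^m;q^2)_\infty$, so $W_j$ exceeds $q^jS_j$ by exactly its $m=1$ term $q^j(q;q^2)_\infty$. Substituting $W_{k-1}=q^{k-1}S_{k-1}+q^{k-1}(q;q^2)_\infty$ yields precisely $S_k=(q-q^{2k-1})S_{k-1}-q^{2k-1}(q;q^2)_\infty$.

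For the base case I would use the same factorization telescoped: from $(q^{n+1};q^2)_\infty-(q^{n-1};q^2)_\infty=q^{n-1}(q^{n+1};q^2)_\infty$ one gets $q^n(q^{n+1};q^2)_\infty=q\big[(q^{n+1};q^2)_\infty-(q^{n-1};q^2)_\infty\big]$, so the partial sums of $S_1$ telescope in steps of two to $q\big[(q^N;q^2)_\infty+(q^{N+1};q^2)_\infty-(1;q^2)_\infty-(q;q^2)_\infty\big]$. Since each term $q^n(q^{n+1};q^2)_\infty$ has lowest degree $n$, letting $N\to\infty$ is legitimate at the level of formal power series; the first two boundary terms tend to $1$ while $(1;q^2)_\infty=0$, leaving $S_1=q\big[2-(q;q^2)_\infty\big]=-q(q;q^2)_\infty+2q$, the $k=1$ instance of the claim. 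Induction then closes the argument.

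I expect the main obstacle to be isolating the auxiliary series $W_j$ and its clean shift relation to $S_j$: the difference $S_k-qS_{k-1}$ does not simplify at a glance, and the decisive move is to recognize that peeling the factor $(1-q^{n-1})$ converts $(q^{n+1};q^2)_\infty$ into $(q^{n-1};q^2)_\infty$, reducing everything to a comparison of two nearly identical sums whose discrepancy is a single boundary term. Everything else is bookkeeping, provided one is careful that the repeated vanishing of $(1;q^2)_\infty$ is exactly what discards the spurious terms in both the recurrence and the base case. Finally, I would note that each manipulation — the factor-peeling and the two index shifts — has a transparent reading on the partitions enumerated by $sptk'_{do}$, so this algebraic induction can be repackaged as the sign-reversing involution promised by the combinatorial viewpoint of the paper.
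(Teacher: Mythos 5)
Your argument is correct: the factor-peeling identity $(1-q^{n-1})(q^{n+1};q^2)_\infty=(q^{n-1};q^2)_\infty$, the shift relation $W_j=q^jS_j+q^j(q;q^2)_\infty$, the resulting recurrence $S_k=(q-q^{2k-1})S_{k-1}-q^{2k-1}(q;q^2)_\infty$, and the telescoped base case $S_1=2q-q(q;q^2)_\infty$ all check out as formal power series, and the verification that the closed form propagates through the recurrence is exactly right. The route, however, is genuinely different from the paper's. You obtain the recurrence by direct $q$-series manipulation of $\sum_{n\ge1}q^{kn}(q^{n+1};q^2)_\infty$, whereas the paper proves the equivalent coefficient-level identity $spt(k-1)'_{do}(n-1)-sptk'_{do}(n)=spt(k-1)'_{do}(n-2k+1)+p'_{do}(n-2k+1)$ (Theorem \ref{thm2-2}(a)) by constructing explicit bijections $\phi_1,\dots,\phi_4$ on sets of partitions (Lemmas \ref{lem1}--\ref{lem4}, sorted by the parities of $n$ and of the statistic $t(\pi)$), with the base case coming from Lemma \ref{lem5} rather than from telescoping; only afterwards does it pass to generating functions, at which point its final induction on $k$ coincides with yours essentially line for line. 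What your approach buys is brevity and self-containedness: two series identities replace four bijective lemmas and their case analyses. What it gives up is precisely the point of this paper --- Andrews and El Bachraoui asked for \emph{combinatorial} proofs, and the bijections are the content being supplied. Your closing observation that the factor-peeling and index shifts ``can be repackaged'' as partition moves is the right instinct (peeling $(1-q^{n-1})$ is adding or deleting a part; the index shifts correspond to adding $2$'s or $1$'s to the repeated smallest parts, which is exactly what $\phi_2$ and $\psi_2$ do), but that repackaging, with its attendant parity bookkeeping, is the part of the work your proof leaves undone.
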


\begin{theorem}\cite[Theorem 5]{AB-JMAA-25}\label{thm3}
  For any positive integer k we have

 $$ \sum_{n\ge 1}sptk_{do}(n)q^n=V_{k}(q)(-q^2;q^2)_{\infty}+W_{k}(q)(-q;q^2)_{\infty}+2(-q)^k(q^2;q^2)_{k-1},$$
where
$$V_{k}(q)=\begin{cases}
               2q  & \mbox{for }k=1, \\
               (q^{2k-1}-q)V_{k-1}(q)+2q^k  & \mbox{for }k>1,
             \end{cases} $$
and
$$W_{k}(q)=\begin{cases}
               q  & \mbox{for }k=1, \\
              (q^{2k-1}-q)W_{k-1}(q)+q^{2k-1}  & \mbox{for }k>1.
             \end{cases} $$
\end{theorem}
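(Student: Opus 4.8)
The plan is to prove the identity by induction on $k$, working directly with the left-hand generating function
$$S_k(q):=\sum_{n\ge1}q^{kn}(-q^{n+1};q^2)_\infty,$$
which is the left-hand side of the theorem by the product expansion recorded just before its statement. Setting $D_k:=S_k(q)-V_k(q)(-q^2;q^2)_\infty-W_k(q)(-q;q^2)_\infty-2(-q)^k(q^2;q^2)_{k-1}$, the whole theorem amounts to showing $D_k\equiv0$, and the strategy is to prove $D_k=(q^{2k-1}-q)D_{k-1}$ for $k\ge2$ together with the base case $D_1=0$. The engine for the recursion is a functional equation for the bivariate series $F(z):=\sum_{n\ge1}z^n(-q^{n+1};q^2)_\infty$, from which $S_k(q)=F(q^k)$ is recovered by specialization.

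First I would derive that functional equation. Writing $f_n:=(-q^{n+1};q^2)_\infty$ and peeling the first factor via $f_n=(1+q^{n+1})f_{n+2}$ gives
$$F(z)=\sum_{n\ge1}z^nf_{n+2}+q\sum_{n\ge1}(zq)^nf_{n+2}.$$
Reindexing each sum by $m=n+2$ and using $\sum_{m\ge3}w^mf_m=F(w)-wf_1-w^2f_2$ for $w\in\{z,zq\}$, then multiplying through by $z^2$, yields the formal-power-series identity
$$z^2F(z)=F(z)+q^{-1}F(zq)-2zf_1-(1+q)z^2f_2,$$
which I would sanity-check by matching the coefficients of $z^1,z^2,z^3$ (the last using $f_1=(1+q^2)f_3$). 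Solving for $F(zq)$ and putting $z=q^{k-1}$ (legitimate for $k\ge2$, where $|q^{k-1}|<1$) produces
$$S_k=(q^{2k-1}-q)S_{k-1}+2q^k(-q^2;q^2)_\infty+(q^{2k-1}+q^{2k})(-q^3;q^2)_\infty.$$
Since $(1+q)(-q^3;q^2)_\infty=(-q;q^2)_\infty$, the last term collapses, leaving the clean recursion $S_k=(q^{2k-1}-q)S_{k-1}+2q^k(-q^2;q^2)_\infty+q^{2k-1}(-q;q^2)_\infty$.

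Next I would check that the three remaining pieces of $D_k$ obey compatible recursions. The definitions give $V_k=(q^{2k-1}-q)V_{k-1}+2q^k$ and $W_k=(q^{2k-1}-q)W_{k-1}+q^{2k-1}$, so $V_k(-q^2;q^2)_\infty+W_k(-q;q^2)_\infty$ satisfies the \emph{same} inhomogeneous recursion as $S_k$ above; meanwhile $2(-q)^k(q^2;q^2)_{k-1}$ satisfies the homogeneous recursion $X_k=(q^{2k-1}-q)X_{k-1}$, because $(q^2;q^2)_{k-1}=(1-q^{2k-2})(q^2;q^2)_{k-2}$ and $-q(1-q^{2k-2})=q^{2k-1}-q$. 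Subtracting, the inhomogeneous terms cancel and one is left with exactly $D_k=(q^{2k-1}-q)D_{k-1}$ for all $k\ge2$, so the induction closes once the base case is in hand.

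The base case $D_1=0$ is the step I expect to be the main obstacle, for a structural reason: the functional equation cannot be evaluated at $z=1$ (the coefficients of $F(z)$ tend to $1$, so $F(1)$ diverges and no recursion reaches $k=1$), and the constant $-2q$ appearing in the $k=1$ identity is a non-product term that the recursion alone never sees. To produce it I would split $S_1=\sum_{n\ge1}q^n(-q^{n+1};q^2)_\infty$ by the parity of $n$, use $(-q^{2m+1};q^2)_\infty=(-q;q^2)_\infty/(-q;q^2)_m$ and $(-q^{2m};q^2)_\infty=(-q^2;q^2)_\infty/(-q^2;q^2)_{m-1}$, and telescope via
$$\frac{1}{(-q;q^2)_{m-1}}-\frac{1}{(-q;q^2)_m}=\frac{q^{2m-1}}{(-q;q^2)_m},\qquad\frac{1}{(-q^2;q^2)_{m-1}}-\frac{1}{(-q^2;q^2)_m}=\frac{q^{2m}}{(-q^2;q^2)_m}.$$
After multiplying back by the two infinite products, the reciprocal products $1/(-q;q^2)_\infty$ and $1/(-q^2;q^2)_\infty$ cancel against their prefactors, leaving $S_1=2q(-q^2;q^2)_\infty+q(-q;q^2)_\infty-2q$, which is precisely the $k=1$ instance (with $V_1=2q$, $W_1=q$, $(q^2;q^2)_0=1$). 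The one genuinely delicate point is tracking that non-product constant: each telescoping contributes a $-q$, and getting both signs and the total $-2q$ right, so that it matches $2(-q)^1(q^2;q^2)_0$, is where the argument is most easily derailed.
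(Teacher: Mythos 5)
Your argument is correct, but it reaches Theorem \ref{thm3} by a genuinely different route from the paper. The engine of your proof is the functional equation $z^2F(z)=F(z)+q^{-1}F(zq)-2zf_1-(1+q)z^2f_2$ for $F(z)=\sum_{n\ge1}z^nf_n$ with $f_n=(-q^{n+1};q^2)_\infty$, whose specialization $z=q^{k-1}$ yields
$$S_k=(q^{2k-1}-q)S_{k-1}+2q^k(-q^2;q^2)_\infty+q^{2k-1}(-q;q^2)_\infty,\qquad k\ge2,$$
together with a separate telescoping evaluation of $S_1$; I verified both computations (including the two $-q$'s combining to $-2q=2(-q)^1(q^2;q^2)_0$, and the fact that $2(-q)^k(q^2;q^2)_{k-1}$ solves the homogeneous recursion), and they are sound. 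The paper arrives at exactly the same recursion, but in coefficient form: the identity $sptk_{do}(n)+spt(k-1)_{do}(n-1)=spt(k-1)_{do}(n-2k+1)+2p_{de}(n-k)+p_{do}(n-2k+1)$ of Theorem \ref{thm2-2}(a) is established bijectively through Lemmas \ref{lem1}--\ref{lem4} (adding or removing $2$'s and $1$'s from the repeated smallest parts, with cases split by the parity of $t(\lambda)$ and of $n$), and your base case plays the role of Lemma \ref{lem5} and Theorem \ref{thm2-2}(b). From the recursion onward the two inductions on $k$ coincide. What your approach buys is brevity and a self-contained $q$-series derivation; what it gives up is precisely the stated purpose of this paper, namely replacing the analytic derivation of the recurrence (Andrews and El Bachraoui's original proof is already analytic and inductive) by a combinatorial one. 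So your proof is a valid proof of the theorem, but it would not serve as a substitute for the paper's bijective content.
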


We note that $sptk_{d}(n)=sptk_{do}(n)=sptk'_{do}(n)=B_0(k,n)=B_1(k,n)=0$ for all $n<k$.

Analytic proofs for the Theorems \ref{thm1}--\ref{thm3} were established by Andrews and El Bachraoui, who employed mathematical induction to derive the corresponding generating functions and also provided applications of these theorems to partitions, as seen in Corollaries \ref{r:cor1}--\ref{r:cor3} below. At the end of their paper \cite{AB-JMAA-25}, they requested combinatorial proofs for these corollaries.

In this paper, we offer alternative combinatorial proofs for Theorems \ref{thm1}--\ref{thm3} by rewriting them as Theorems \ref{thm1-1} and \ref{thm2-2}. The Corollaries \ref{r:cor1}--\ref{r:cor3} can be immediately proved using Theorems \ref{thm1-1} and \ref{thm2-2}.

\begin{cor}\cite[Cor. 5]{AB-JMAA-25}
\label{r:cor1}
(a) For any integer $n\geq 2$ we have
\begin{align}\label{eq-cor-1}
spt2_d(n)=2p_d(n-1)-p_d(n).
\end{align}
(b) For any integer $n\geq 4$ we have
\begin{align}\label{eq-cor-2}
spt3_d(n)=2p_d(n-3)-2p_d(n-1)+p_d(n).
\end{align}
\end{cor}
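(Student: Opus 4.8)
The plan is to derive both identities directly from Theorem~\ref{thm1}, specialized to $k=2$ and $k=3$, by expanding the right-hand side and reading off the coefficient of $q^n$. Throughout I use that $(-q;q)_\infty=\prod_{j\ge 1}(1+q^j)=\sum_{n\ge 0}p_d(n)q^n$ is the generating function for partitions into distinct parts, so that the coefficient of $q^n$ in $q^m(-q;q)_\infty$ equals $p_d(n-m)$. First I would evaluate the two polynomials $P_k(q)$ from the given recurrence: from $P_1(q)=1$ one gets $P_2(q)=(q-1)\cdot 1+q=2q-1$, and then $P_3(q)=(q^2-1)(2q-1)+q^2=2q^3-2q+1$. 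These are the only instances of the recurrence needed.

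For part~(a), Theorem~\ref{thm1} with $k=2$ reads
$$\sum_{n\ge 1}spt2_d(n)q^n=(2q-1)(-q;q)_\infty+(1-q).$$
Splitting $(2q-1)(-q;q)_\infty=2q(-q;q)_\infty-(-q;q)_\infty$ and extracting the coefficient of $q^n$ yields $2p_d(n-1)-p_d(n)$, which is exactly \eqref{eq-cor-1}. For part~(b), Theorem~\ref{thm1} with $k=3$ reads
$$\sum_{n\ge 1}spt3_d(n)q^n=(2q^3-2q+1)(-q;q)_\infty-(1-q)(1-q^2),$$
and the coefficient of $q^n$ in $(2q^3-2q+1)(-q;q)_\infty$ is $2p_d(n-3)-2p_d(n-1)+p_d(n)$, matching \eqref{eq-cor-2}.

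The one point that genuinely needs checking---rather than being the main difficulty, since the algebra is routine---is that the finite correction term $(-1)^k(q;q)_{k-1}$ does not perturb the coefficients in the asserted ranges. Because $(q;q)_{k-1}=\prod_{j=1}^{k-1}(1-q^j)$ is a polynomial of degree $\binom{k}{2}$, it contributes only to the coefficients of $q^0,\dots,q^{\binom{k}{2}}$. For $k=2$ the degree is $1$, so $(1-q)$ is harmless once $n\ge 2$; for $k=3$ the degree is $3$, so $-(1-q)(1-q^2)$ is harmless once $n\ge 4$. This explains the thresholds $n\ge 2$ and $n\ge 4$ in the statement. A more combinatorial derivation in the spirit of the paper would instead pass through the reformulation in Theorem~\ref{thm1-1}, but the coefficient extraction above already delivers the corollary.
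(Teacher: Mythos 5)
Your proof is correct: the computations $P_2(q)=2q-1$ and $P_3(q)=2q^3-2q+1$ are right, the coefficient extraction is right, and you correctly identify that the correction term $(-1)^k(q;q)_{k-1}$ has degree $\binom{k}{2}$ and hence explains the thresholds $n\ge 2$ and $n\ge 4$. However, your route is genuinely different from the paper's. The paper deduces the corollary from Theorem \ref{thm1-1}, the bijectively proved recurrence $sptk_{d}(n)+spt(k-1)_{d}(n)=spt(k-1)_{d}(n-k+1)+p_d(n-k+1)$ together with $spt1_d(n)=p_d(n)$: taking $k=2$ gives $spt2_d(n)=p_d(n-1)+p_d(n-1)-p_d(n)$ for $n\ge 2$, and taking $k=3$ and substituting the $k=2$ case gives $spt3_d(n)=spt2_d(n-2)+p_d(n-2)-spt2_d(n)=2p_d(n-3)-2p_d(n-1)+p_d(n)$ for $n\ge 4$. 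This is a purely combinatorial derivation, which matters here because the whole point of the paper is to answer Andrews and El Bachraoui's request for combinatorial proofs of exactly these corollaries; your argument instead leans on the generating-function identity of Theorem \ref{thm1}, which was established analytically in the original paper, so it proves the identity but not in the spirit the corollary was meant to be reproved. What your approach buys is a uniform mechanism (compute $P_k$, extract coefficients, bound the degree of the correction term) that would handle any fixed $k$ without chaining recurrences; what the paper's approach buys is that every step is backed by an explicit bijection.
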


\begin{cor}\cite[Cor. 7]{AB-JMAA-25}
\label{r:cor2}
 For any integer $n\geq 2$ we have
\begin{align*}
spt1_{do}(n)=2p_{de}(n-1)+p_{do}(n-1).
\end{align*}
\end{cor}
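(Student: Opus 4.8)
The plan is to give a direct bijective proof. First I would reinterpret the left-hand side: since in a partition counted by $spt1_{do}(n)$ the smallest part $s$ occurs exactly once and every remaining part is distinct, larger than $s$, and of parity opposite to $s$, such a partition is simply a partition of $n$ into distinct parts $s < a_1 < \cdots < a_r$ in which $a_1,\dots,a_r$ all share the parity opposite to that of $s$. On the right-hand side I read $p_{de}(m)$ (resp.\ $p_{do}(m)$) as the number of partitions of $m$ into distinct even (resp.\ distinct odd) parts, so that $2p_{de}(n-1)+p_{do}(n-1)$ is two colored copies of the distinct-even partitions of $n-1$ together with one copy of the distinct-odd partitions of $n-1$. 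The generating function $\sum_{n\ge1}spt1_{do}(n)q^n=\sum_{n\ge1}q^n(-q^{n+1};q^2)_\infty$ serves only as a guide and a numerical check.

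The central device is the map $\Phi$ that subtracts $1$ from the smallest part $s$ (deleting the resulting $0$ when $s=1$) and leaves $a_1,\dots,a_r$ fixed. Since $s-1$ has the same parity as each $a_i$ and, for $s\ge 2$, still satisfies $s-1 < s < a_1$, the image $\{s-1,a_1,\dots,a_r\}$ is a partition of $n-1$ into distinct parts all of one parity. I would then split according to the parity of $s$: if $s$ is even the image is a distinct-\emph{odd} partition of $n-1$, and if $s$ is odd it is a distinct-\emph{even} partition of $n-1$. The even case is settled by checking that $\Phi$ restricts to a bijection from $\{\pi:\ s\text{ even}\}$ onto the distinct-odd partitions of $n-1$, the inverse sending a distinct-odd partition with smallest part $\ell_1$ to the Spt-partition with smallest part $\ell_1+1$.

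For odd $s$ the map is two-to-one onto the distinct-even partitions, and isolating the two preimages is exactly what produces the factor $2$: a given distinct-even partition $\mu=\{m_1<m_2<\cdots\}$ of $n-1$ arises both from $\pi=\{1,m_1,m_2,\dots\}$ (the case $s=1$, where $\Phi$ merely deletes the part $1$) and from $\pi'=\{m_1+1,m_2,m_3,\dots\}$ (the case $s\ge 3$ odd, where $\Phi$ decrements the smallest part). I would therefore color the image according to whether the original smallest part equals $1$ or is $\ge 3$, and verify that each color yields a genuine bijection onto the full set of distinct-even partitions of $n-1$.

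The only real obstacle is confirming that these two colored maps are each well-defined bijections: one must check that decrementing the smallest part preserves minimality (which uses $a_1 > s$, hence $a_1\ge s+1 > s-1$, together with the automatic gap $m_2\ge m_1+2$ between consecutive distinct even parts), and that within a fixed color no distinct-even partition of $n-1$ is missed or repeated. The hypothesis $n\ge 2$ enters precisely here, ensuring $n-1\ge 1$ so that the empty partition never appears on the target side to spoil the count. Assembling the even-$s$ bijection with the two odd-$s$ bijections then gives $spt1_{do}(n)=2p_{de}(n-1)+p_{do}(n-1)$.
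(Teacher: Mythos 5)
Your argument is correct and proves the corollary. It rests on the same elementary operation that underlies the paper's treatment---subtracting $1$ from the unique smallest part, i.e.\ deleting it when $s(\pi)=1$---but you organize the count differently and more directly. The paper obtains this corollary as the $k=1$ instance of Theorem \ref{thm2-2}(b), which is extracted from Lemma \ref{lem5}: there $\mathrm{Spt}1_{do}(n)$ is refined by the parity of the number $t(\pi)$ of parts exceeding $s(\pi)$ (the $B_0(1,n)$/$B_1(1,n)$ split) together with the parity of $n$, and the coefficient $2$ emerges because $B_0(1,n)$ and $B_1(1,n)$ each contribute one copy of $p_{de}(n-1)$. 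You instead split by the parity of $s(\pi)$: even smallest parts are sent bijectively to the distinct-odd partitions of $n-1$, while the odd-smallest-part case is exhibited as a two-to-one map onto the distinct-even partitions of $n-1$, with the two preimages distinguished by $s=1$ versus $s\ge 3$; this explains the factor $2$ in one stroke and requires no auxiliary parity statistic. The verifications you flag are exactly the right ones: the gap of at least $2$ between consecutive distinct parts of equal parity guarantees that incrementing the smallest part of the image preserves its minimality and uniqueness, and $n\ge 2$ is needed precisely so that the empty partition (which would be counted by both $p_{de}(0)$ and $p_{do}(0)$) never occurs as an image. The trade-off is scope: the paper's bookkeeping by $t(\pi)$ is what lets the same maps drive the general recurrences of Lemmas \ref{lem1}--\ref{lem4} for all $k$ and for the signed counts $sptk'_{do}(n)$, whereas your decomposition is tailored to $k=1$ and the unsigned count---a perfectly adequate, and cleaner, route for this particular identity.
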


\begin{cor}\cite[Cor. 8]{AB-JMAA-25}
\label{r:cor3}
(a) For any integer $n\geq 2$ we have
\begin{align*}
spt1'_{do}(n)=-p'_{do}(n-1).
\end{align*}
(b) For any integer $n\geq 5$ we have
\begin{align*}
spt2'_{do}(n)=p'_{do}(n-4)-p'_{do}(n-3)-p'_{do}(n-2).
\end{align*}
\end{cor}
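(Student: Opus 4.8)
\textbf{Proof proposal for Corollary \ref{r:cor3}.}
The plan is to read both identities directly off Theorem \ref{thm2} (equivalently, its rewritten form) by extracting power-series coefficients, once the signed counting function $p'_{do}$ has been pinned down as a generating function. The first step is to record that $p'_{do}(n)$ counts the partitions of $n$ into distinct odd parts, each weighted by $(-1)$ to the number of parts, so that
$$\sum_{n\ge 0} p'_{do}(n)q^n = (q;q^2)_\infty .$$
This is the signed companion of the interpretation $\sum_n p_{do}(n)q^n=(-q;q^2)_\infty$ used in Corollary \ref{r:cor2}, and with it every statement in the corollary becomes a routine comparison of coefficients against the product $(q;q^2)_\infty$ that already appears in Theorem \ref{thm2}.

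For part (a) I would set $k=1$ in Theorem \ref{thm2}. Since $T_1(q)=-q$ and $(q^2;q^2)_0=1$, this gives
$$\sum_{n\ge1}spt1'_{do}(n)q^n=-q(q;q^2)_\infty+2q=-\sum_{n\ge1}p'_{do}(n-1)q^n+2q .$$
The correction term $2q$ is supported only at $n=1$, so comparing coefficients of $q^n$ for $n\ge 2$ yields $spt1'_{do}(n)=-p'_{do}(n-1)$, as claimed.

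For part (b) I would first unwind the recursion to get $T_2(q)=(q-q^3)T_1(q)-q^3=-q^2-q^3+q^4$, and note $(q^2;q^2)_1=1-q^2$. Substituting into Theorem \ref{thm2} with $k=2$ gives
$$\sum_{n\ge1}spt2'_{do}(n)q^n=(q^4-q^3-q^2)(q;q^2)_\infty+2q^2(1-q^2) .$$
Expanding the first product against $\sum_m p'_{do}(m)q^m$ produces exactly $\sum_n\bigl(p'_{do}(n-4)-p'_{do}(n-3)-p'_{do}(n-2)\bigr)q^n$, while the polynomial correction $2q^2-2q^4$ is supported on $n\in\{2,4\}$. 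Comparing coefficients of $q^n$ for $n\ge5$ then gives the stated identity.

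The only genuine subtlety, and the step I would be most careful about, is bookkeeping the finite correction $2q^k(q^2;q^2)_{k-1}$. Since $(q^2;q^2)_{k-1}$ has degree $k(k-1)$ in $q$, this term is a polynomial of degree $k+k(k-1)=k^2$, so it can only perturb the coefficients with $n\le k^2$. This is precisely why the two parts are stated for $n\ge 2=1^2+1$ and $n\ge 5=2^2+1$, and it confirms that nothing from the correction survives in the stated ranges. I would also verify that the sign conventions baked into the definition of $p'_{do}$ agree with the signs generated by $T_k(q)$, as a misplaced sign there is the easiest way for the coefficient comparison to break down.
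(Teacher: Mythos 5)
Your proposal is correct: the coefficient extraction from Theorem \ref{thm2} is carried out accurately ($T_2(q)=-q^2-q^3+q^4$, $(q^2;q^2)_1=1-q^2$, and the correction polynomials $2q$ and $2q^2-2q^4$ indeed die out in the stated ranges $n\ge 2$ and $n\ge 5$), and your identification $\sum_{n\ge0}p'_{do}(n)q^n=(q;q^2)_\infty$ matches the paper's \eqref{r:q3}. However, the route is genuinely different from the paper's. The paper obtains Corollary \ref{r:cor3} directly from the combinatorial recurrence of Theorem \ref{thm2-2}: part (a) is literally Theorem \ref{thm2-2}(b), and part (b) follows by taking $k=2$ in Theorem \ref{thm2-2}(a), which gives $spt2'_{do}(n)=spt1'_{do}(n-1)-spt1'_{do}(n-3)-p'_{do}(n-3)$, and then substituting $spt1'_{do}(m)=-p'_{do}(m-1)$ for $m=n-1$ and $m=n-3$ (valid once $n-3\ge2$, i.e.\ $n\ge5$, which is where the threshold comes from). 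Your argument instead passes through the analytic generating-function identity of Theorem \ref{thm2} and compares coefficients; this is essentially the derivation Andrews and El Bachraoui themselves could give, and it is perfectly valid, but it forfeits the point of this paper, namely that these corollaries were the statements for which a \emph{combinatorial} proof was requested. What your approach buys is uniformity (the same coefficient comparison works for every $k$ once $T_k$ is computed, and your degree bound $k^2$ on the correction term cleanly predicts the valid range of $n$); what the paper's approach buys is a proof in which every identity is witnessed by an explicit bijection, with no recourse to the $q$-series machinery of Theorem \ref{thm2}.
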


This paper is organized as follows. In Section 2, we present a combination proof of Theorem \ref{thm1}. In Section 3, we introduce the  recurrence relations associated with $B_{0}(k,n)$ and $B_{1}(k,n)$ to provide combinatorial proofs for Theorems \ref{thm2} and \ref{thm3}.

\section{Combinatorial proof of Theorem \ref{thm1} }
Let $\pi$ denote a partition of positive integer $n$. The smallest part of $\pi$ is denoted by $s(\pi)$, and the second smallest part is denoted by $ss(\pi)$.
Additionally, let $p_d(n)$ be the number of partitions of $n$ into distinct parts for which it is well-known that \cite{An-76}
 \begin{align*}
 \sum_{n\geq 0}p_d(n)q^n=(-q;q)_\infty \quad \text{  (assuming that $p_d(0)=1$)}.
 \end{align*}



\begin{theorem}\label{thm1-1}
(a) For ${k}\ge 2$ we have
\begin{align*}
    sptk_{d}(n)+spt(k-1)_{d}(n)=spt(k-1)_{d}(n-k+1)+p_d(n-k+1).
\end{align*}
(b) For $n\geq 2$, we have
\begin{align*}
    spt1_{d}(n)=p_d(n).
\end{align*}

\end{theorem}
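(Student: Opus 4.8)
The plan is to dispose of part (b) directly and then reduce part (a) to the composition of two explicit bijections, the whole argument being driven by the elementary product identity $(1+q^m)(-q^{m+1};q)_\infty=(-q^m;q)_\infty$.

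For part (b) I would observe that a partition lies in $\mathrm{Spt}1_d(n)$ exactly when its smallest part occurs once and every other part is distinct and larger than it; equivalently, all of its parts are distinct. Hence the identity map is a bijection between $\mathrm{Spt}1_d(n)$ and the partitions of $n$ into distinct parts, which gives $spt1_d(n)=p_d(n)$ immediately.

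For part (a) the idea is to route both sides through an intermediate family. Let $\mathcal{M}_k(n)$ be the set of pairs $(m,S)$ where $m\ge 1$ is an integer and $S$ is a finite set of distinct integers each $\ge m$, subject to $(k-1)m+\sum_{s\in S}s=n$; its generating function is $\sum_{m\ge 1}q^{(k-1)m}(-q^m;q)_\infty$. First I would build a \emph{merging} bijection $\Phi$ from $\mathrm{Spt}k_d(n)\sqcup\mathrm{Spt}(k-1)_d(n)$ onto $\mathcal{M}_k(n)$: a partition with smallest part $m$ repeated $k$ times and distinct larger parts $P$ is sent to $(m,P\cup\{m\})$, while one with $m$ repeated $k-1$ times and distinct larger parts $P$ is sent to $(m,P)$. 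Since $P\cup\{m\}$ contains $m$ but $P$ does not, the dichotomy $m\in S$ versus $m\notin S$ recovers the two summands, and this is exactly the combinatorial content of $(1+q^m)(-q^{m+1};q)_\infty=(-q^m;q)_\infty$. Next I would build a \emph{shifting} bijection $\Psi$ from $\mathcal{M}_k(n)$ onto $\mathrm{Spt}(k-1)_d(n-k+1)\sqcup\{\text{partitions of }n-k+1\text{ into distinct parts}\}$: if $m=1$ then $\sum_{s\in S}s=n-k+1$ and I send $(1,S)$ to the distinct partition $S$; if $m\ge 2$ I replace the $k-1$ copies of $m$ by $k-1$ copies of $m-1$ while keeping $S$, landing in $\mathrm{Spt}(k-1)_d(n-k+1)$. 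Composing, $\Psi\circ\Phi$ is the desired bijection.

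The hypothesis $k\ge 2$ enters so that $k-1\ge 1$ and ``smallest part repeated $k-1$ times'' is a legitimate configuration. The main points to verify, and where the argument could slip, are the smallest-part bookkeeping: in $\Phi$ one checks that adjoining or omitting $m$ keeps $m$ strictly smallest and the larger parts distinct; in $\Psi$ the case $m\ge 2$ requires every element of $S$ to satisfy $s\ge m>m-1$, so that after decrementing the base the value $m-1$ is still the unique smallest part, and the inverse (sending a smallest part $\ell$ to $m=\ell+1$) is well defined only because $S$ avoids $m-1$. I expect this disjointness-and-ordering check — confirming that $m=1$ and $m\ge 2$ map bijectively onto the two distinct summands of the right-hand side — to be the main obstacle, although it becomes routine once the two maps are written out.
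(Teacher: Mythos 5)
Your proof is correct and is essentially the paper's argument: the composite $\Psi\circ\Phi$ is exactly the map the paper calls $\phi$ (decrement $k-1$ copies of the smallest part by one, letting copies of $1$ vanish), and your dichotomies $m\in S$ versus $m\notin S$ and $m=1$ versus $m\ge 2$ reproduce the paper's four cases, which it tracks via $s(\mu)=1$ versus $s(\mu)>1$ and $ss(\mu)=s(\mu)+1$ versus $ss(\mu)>s(\mu)+1$. Factoring through the intermediate set $\mathcal{M}_k(n)$ is a clean presentational device but not a genuinely different route.
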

\begin{proof}
Case (b) is obvious. We prove case (a). Define $\phi$: Spt$k_{d}(n)$ $\cup$ Spt$(k-1)_{d}(n)$ $\rightarrow$ Spt$(k-1)_{d}(n-k+1)$ $\cup$ $\pi_{p_d}(n-k+1)$.
Let $\lambda$ $\in$ Spt$k_{d}(n)$ $\cup$ Spt$(k-1)_{d}(n)$ such that $\phi(\lambda)=\mu$, where $\mu$ is constructed as follows.

(1)If $\lambda\in$ Spt$(k-1)_{d}(n)$, we remove one $``1"$ from each of the $k-1$ repeated parts, and denote the resulting partition as $\mu$, which satisfies:
  \begin{itemize}
  \item If $s(\lambda)=1$, then
   $\mu\in$ $\pi_{p_d}(n-k+1)$ with $s(\mu)>1$;

  \item If $s(\lambda)>1$, then
   $\mu\in$ Spt$(k-1)_{d}(n-k+1)$ with $s(\mu)=s(\lambda)-1<ss(\mu)-1$.
\end{itemize}

(2)If $\lambda\in$ Spt$k_{d}(n)$, we remove one $"1"$ from each of the first $k-1$ repeated parts, and denote the resulting partition as $\mu$, which satisfies:
  \begin{itemize}
  \item If $s(\lambda)=1$, then
   $\mu\in$ $\pi_{p_d}(n-k+1)$ with $s(\mu)=1$;

  \item If $s(\lambda)>1$, then
   $\mu\in$ Spt$(k-1)_{d}(n-k+1)$ with  $s(\mu)=s(\lambda)-1=ss(\mu)-1$.
\end{itemize}
Thus, $\mu\in$ Spt$(k-1)_{d}(n-k+1)$ $\cup$ $\pi_{p_d}(n-k+1)$.

Conversely, we define $\psi$: Spt$(k-1)_{d}(n-k+1)$ $\cup$ $\pi_{p_d}(n-k+1)$ $\rightarrow$ Spt$k_{d}(n)$ $\cup$ Spt$(k-1)_{d}(n)$. Let $\mu$ $\in$ Spt$(k-1)_{d}(n-k+1)$ $\cup$ $\pi_{p_d}(n-k+1)$ such that $\psi(\mu)=\lambda$, where $\lambda$ is constructed as follows.

(1)If $\mu\in$ Spt$(k-1)_{d}(n-k+1)$, we add one $1$ to each of the $k-1$ repeated parts in $\mu$, and denote the resulting partition as $\lambda$, which satisfies:
\begin{itemize}
  \item If $ss(\mu)=s(\mu)+1$, then
   $\lambda\in$ Spt$k_{d}(n)$ with $s(\lambda)=s(\mu)+1>1$;

  \item If $ss(\mu)>s(\mu)+1$, then
   $\lambda\in$ Spt$(k-1)_{d}(n)$ with $s(\lambda)=s(\mu)+1>1$.
\end{itemize}

(2)If $\mu\in$  $\pi_{p_d}(n-k+1)$, we add $k-1$ ones as parts to $\mu$, and denote the resulting partition  as $\lambda$, which satisfies:

\begin{itemize}
  \item If $s(\mu)=1$, then
   $\lambda\in$ Spt$k_{d}(n)$ with $s(\lambda)=1$;

  \item If $s(\mu)>1$, then
   $\lambda\in$ Spt$(k-1)_{d}(n)$ with $s(\lambda)=1$.

\end{itemize}
Thus, $\lambda\in$ Spt$k_{d}(n) \cup$ Spt$(k-1)_{d}(n)$.

Then we have $\psi\circ \phi(\lambda)=\lambda$ and $\phi\circ \psi(\mu)=\mu$, i.e. $\psi\circ \phi=\phi\circ \psi=id$. It follows that $\phi$: Spt$k_{d}(n)$ $\cup$ Spt$(k-1)_{d}(n)$ $\rightarrow$ Spt$k_{d}(n-k)$ $\cup$ $\pi_{p_d(n-k)}$ is a bijection, implying that  spt$k_{d}(n)+$spt$(k-1)_{d}(n)=$spt$(k-1)_{d}(n-k+1)+p_d(n-k+1)$.
\end{proof}

We can now ready to prove Theorem \ref{thm1} by mathematical induction on $k$.
Since $spt1_d(n)=p_d(n)$ for $n\geq 1$,  we have
 \begin{align*}
 \sum_{n\ge1}spt1(n)q^n=\sum_{n\ge1}p_d(n)q^n=(-q;q)_\infty-1,
 \end{align*}
  which is the desired identity for $k=1$. Now assuming that the result holds for $k-1$, by Theorem \ref{thm1-1}, we obtain
%

\begin{align*}
  &\sum_{n\ge 1}sptk_{d}(n)q^n=\sum_{n\ge k}sptk_{d}(n)q^n\\
  =&\sum_{n\ge k}spt(k-1)_{d}(n-k+1)q^n-\sum_{n\ge k}spt(k-1)_{d}(n)q^n+\sum_{n\ge k}p_d(n-k+1)q^n\\
  =&\sum_{n\ge 1}spt(k-1)_{d}(n)q^{n+k-1}-\sum_{n\ge k}spt(k-1)_{d}(n)q^n+\sum_{n\ge 1}p_d(n)q^{n+k-1}\\
    =&q^{k-1}\sum_{n\ge 1}spt(k-1)_{d}(n)q^n-\bigg(\sum_{n\ge 1}spt(k-1)_{d}(n)q^n\\
    &-\sum_{n=1}^{k-1} spt(k-1)_{d}(n)q^n\bigg)+q^{k-1}(\sum_{n\ge 0}p_d(n)q^n-p_d(0)q^0)\\
    =&(q^{k-1}-1)(P_{k-1}(q)(-q;q)_\infty+(-1)^{k-1}(q;q)_{k-2})+q^{k-1}(-q;q)_\infty\\
    =&P_{k}(q)(-q;q)_\infty+(-1)^{k}(q;q)_{k-1}.
\end{align*}

This completes the proof of Theorem \ref{thm1}.
\section{Combinatorial proofs of Theorems \ref{thm2} and \ref{thm3} }

 Let $t(\pi)$  be the number of parts in partition $\pi$ that are greater than $s(\pi)$, and let $\ell(\pi)$ be the length of $\pi$. Recall that $\pi_{B0}(k,n)$ (resp., $\pi_{B1}(k,n)$) represents  the set of partitions in the set Spt$k_{do}(n)$ an even (resp., odd) number of parts  greater than $s(\pi)$. Also, we recall that $\pi_{p_d}(n)$ denotes the set of partitions of $n$ with all  distinct parts.
  Denote $\pi_{pde}(n)$(resp., $\pi_{pdo}(n))$ as the set of partitions of $n$ with all  distinct even (resp., odd) parts. The number of such partitions is  $p_{de}(n)$ (resp., $p_{do}(n)$). Let $p'_{do}(n)$ be the difference between the number of partitions counted by $p_{do}(n)$ with an even  number of parts and those with an odd number of parts.  Note that \cite{An-09}
  \begin{align}
  \label{r:q1}
  (-q^2;q^2)_\infty=\sum_{n\geq 0}p_{de}(n)q^n  \text{\qquad (assuming that $p_{de}(0)=1$)},\\
  \label{r:q2}
  (-q;q^2)_\infty=\sum_{n\geq 0}p_{do}(n)q^n \text{\qquad (assuming that $p_{do}(0)=1$)},\\
  \label{r:q3}
  (q;q^2)_\infty=\sum_{n\geq 0}p'_{do}(n)q^n  \text{\qquad (assuming that $p'_{do}(0)=1$)}.
  \end{align}


To establish Theorem \ref{thm2-2}, we need to prove Lemmas \ref{lem1}--\ref{lem4} first.

\begin{lemma}\label{lem1}
  For positive integers $k$, we have
  \begin{align*}
   B_0(k+1,2t+1)+B_1(k,2t)=B_1(k,2t-2k)+p_{de}(2t-k).
  \end{align*}
\end{lemma}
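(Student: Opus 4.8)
The plan is to prove the identity bijectively, by exhibiting a correspondence
$$\phi:\ \pi_{B0}(k+1,2t+1)\ \cup\ \pi_{B1}(k,2t)\ \longrightarrow\ \pi_{B1}(k,2t-2k)\ \cup\ \pi_{pde}(2t-k),$$
since the two sides of the asserted equality are exactly the cardinalities of these two (internally disjoint) unions. Before constructing $\phi$ I would record a parity observation pinning down the structure of every partition involved. If $k$ is odd then all four sets are empty: $p_{de}(2t-k)=0$ because $2t-k$ is then odd, and a short count of the smallest part against the required size parity empties the three $B$-classes; so the identity reads $0=0$ and I may assume $k$ even. Under this assumption the same size-parity count forces the smallest part $s(\pi)$ to be \emph{odd} in each nonempty class, so that the parts exceeding $s(\pi)$ are distinct and even. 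This is precisely what makes the maps below well defined, since it guarantees $s(\pi)\ge 3$ whenever $s(\pi)>1$.

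I would then split $\phi$ according to whether the smallest part equals $1$ or exceeds $1$. On the pieces with $s(\pi)=1$ I delete every copy of the part $1$. A partition in $\pi_{B0}(k+1,2t+1)$ with $s(\pi)=1$ has $k+1$ ones and an \emph{even} number of distinct even larger parts, so deleting the ones leaves a distinct-even partition of $2t-k$ of even length; a partition in $\pi_{B1}(k,2t)$ with $s(\pi)=1$ has $k$ ones and an \emph{odd} number of larger parts, so deleting the ones leaves a distinct-even partition of $2t-k$ of odd length. Since every member of $\pi_{pde}(2t-k)$ has either even or odd length, these two sources together cover $\pi_{pde}(2t-k)$ bijectively.

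On the pieces with $s(\pi)\ge 3$ the target is $\pi_{B1}(k,2t-2k)$, and I would separate the two sources by the gap $ss-s$ of the image. For $\lambda\in\pi_{B1}(k,2t)$ with $s(\lambda)=s\ge 3$ I subtract $2$ from each of the $k$ copies of $s$, leaving the larger parts untouched; the image lies in $\pi_{B1}(k,2t-2k)$ with smallest part $s-2$ and second-smallest part exceeding $s-2$ by more than $1$. For $\lambda\in\pi_{B0}(k+1,2t+1)$ with $s(\lambda)=s\ge 3$ I remove one copy of $s$, subtract $2$ from each of the remaining $k$ copies, and insert a single new part $s-1$; here the size drops by exactly $2k+1$ and the number of parts above the smallest flips from even to odd, so the image lies in $\pi_{B1}(k,2t-2k)$ with second-smallest part equal to the smallest plus $1$. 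Because within any member of $\pi_{B1}(k,2t-2k)$ the second-smallest part is even while the smallest is odd, the difference $ss-s$ is odd; thus the two options $ss-s=1$ and $ss-s\ge 3$ are exhaustive and disjoint, and these two sources cover $\pi_{B1}(k,2t-2k)$ without overlap.

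Finally I would write down the inverse $\psi$ explicitly — adding $k$ or $k+1$ ones to a distinct-even partition according to its parity of length, and reversing the subtract-$2$ or the delete/subtract/insert operations according to whether $ss-s>1$ or $ss-s=1$ — and check $\psi\circ\phi=\mathrm{id}$ and $\phi\circ\psi=\mathrm{id}$. I expect the main obstacle to be the bookkeeping that each map is well defined: that parities and distinctness of parts are preserved (this is exactly where "$s$ odd, larger parts even" is used), that the sizes change by the prescribed amounts, and above all that each image lands in the correct $B_0$ or $B_1$ class, which rests on tracking how each operation alters the parity of the number of parts exceeding the smallest part. Once these verifications and the exhaustive, disjoint case splits are in hand, the equality of cardinalities follows at once.
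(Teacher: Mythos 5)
Your proposal is correct and follows essentially the same route as the paper: the identical bijection (delete the ones when $s=1$, splitting $\pi_{pde}(2t-k)$ by parity of length; subtract $2$ from the repeated smallest parts when $s\ge 3$, with the extra ``peel one copy down to $s-1$'' move distinguishing the $B_0$ source via the gap $ss-s$), together with the same parity reductions to $k$ even and $s$ odd. Your description of the $B_0$, $s\ge 3$ operation (remove a copy of $s$, subtract $2$ from the remaining $k$ copies, insert $s-1$) is the same map the paper writes as removing $2$ from the first $k$ copies and $1$ from the last.
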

\begin{proof}
When $k$ is odd,  the sets $\pi_{B0}(k+1,2t+1)$, $\pi_{B1}(k,2t)$, $\pi_{B1}(k,2t-2k)$ and $\pi_{pde}(2t-k)$  are all empty. Therefore, we only need consider the case when $k$ is even.

We define $\phi_1$: $\pi_{B0}(k+1,2t+1)$$\cup$$\pi_{B1}(k,2t)$ $\rightarrow$ $\pi_{B1}(k,2t-2k)$$\cup$$\pi_{pde}(2t-k)$.
For $\lambda$ $\in$ $\pi_{B0}(k+1,2t+1)$$\cup$$\pi_{B1}(k,2t)$, we construct $\phi_1(\lambda)=\mu$ as follows.


(1) If $\lambda\in$ $\pi_{B0}(k+1,2t+1)$, note that $t(\lambda)$ is even and $s(\lambda)$ is odd. We denote the resulting partition as $\mu$, which satisfies:
  \begin{itemize}
  \item If $s(\lambda)=1$,  we remove one $``1"$ from each of the $k+1$ repeated parts in $\lambda$. Then
   $\mu\in$ $\pi_{pde}(2t-k)$ with even $\ell(\mu)$;

  \item If $s(\lambda)>1$,  we remove $``2"$ from each of the first $k$ repeated parts and remove one $``1"$ from the last repeated part. Then
      $\mu\in$ $\pi_{B1}(k,2t-2k)$ with $s(\mu)=s(\lambda)-2$ and $ss(\mu)=s(\lambda)-1$.
\end{itemize}

(2) If $\lambda\in$ $\pi_{B1}(k,2t)$,  note that $t(\lambda)$ and $s(\lambda)$  are both odd. We denote the resulting partition as $\mu$, which satisfies:
  \begin{itemize}
  \item If $s(\lambda)=1$, we remove one $``1"$ from each of the $k$ repeated parts in $\lambda$. Then
   $\mu\in$ $\pi_{pde}(2t-k)$ with odd $\ell(\mu)$;

  \item If $s(\lambda)>1$, we remove one $``2"$ from each of the $k$ repeated parts. Then
      $\mu\in$ $\pi_{B1}(k,2t-2k)$ with $s(\mu)=s(\lambda)-2$ and  $ss(\mu)>s(\lambda)$.
\end{itemize}
Thus, $\mu\in$ $\pi_{pde}(2t-k)$ $\cup$ $\pi_{B1}(k,2t-2k)$.

Conversely, we define $\psi_1$: $\pi_{B1}(k,2t-2k)$$\cup$$\pi_{pde}(2t-k)$ $\rightarrow$  $\pi_{B0}(k+1,2t+1)$$\cup$$\pi_{B1}(k,2t)$. For $\mu$ $\in$ $\pi_{pde}(2t-k)$ $\cup$ $\pi_{B1}(k,2t-2k)$, we construct $\psi_1(\mu)=\lambda$ as follows.

(1) If $\mu\in$ $\pi_{B1}(k,2t-2k)$, note that the remaining parts are distinct and incongruent modulo $2$ with $s(\mu)$. We denote the resulting partition as $\lambda$, which satisfies:
  \begin{itemize}
  \item If $ss(\mu)=s(\mu)+1$, we add one $``2"$ to each of the $k$ repeated parts in $\mu$, and add one $``1"$ to the second smallest part. Then
   $\lambda\in$ $\pi_{B0}(k+1,2t+1)$ with $s(\lambda)=s(\mu)+2\geq 3$;

  \item If $ss(\mu)>s(\mu)+1$, we add one $``2"$ to each of the  $k$ repeated parts. Then
      $\lambda\in$ $\pi_{B1}(k,2t)$ with $s(\lambda)=s(\mu)+2\geq 3$.
\end{itemize}

(2) If $\mu\in$ $\pi_{pde}(2t-k)$, note that $ s(\mu)$ is even. We denote the resulting partition as $\lambda$, which satisfies:
  \begin{itemize}
  \item If $\ell(\mu)$ is odd, we add $k$ ones as parts in $\mu$. Then
   $\lambda\in$ $\pi_{B1}(k,2t)$  with  $s(\lambda)=1$;

  \item If $\ell(\mu)$ is even, we add $k+1$ ones as parts in $\mu$. Then
   $\lambda\in$ $\pi_{B0}(k+1,2t+1)$  with  $s(\lambda)=1$.
\end{itemize}
Thus, $\lambda\in$ $\pi_{B0}(k+1,2t+1)$$\cup$$\pi_{B1}(k,2t)$.

Then we have $\psi_1\circ \phi_1(\lambda)=\lambda$ and $\phi_1\circ \psi_1(\mu)=\mu$, i.e. $\psi_1\circ \phi_1=\phi_1\circ \psi_1=id$. It follows that $\phi_1$: $\pi_{B0}(k+1,2t+1)$$\cup$$\pi_{B1}(k,2t)$ $\rightarrow$ $\pi_{B1}(k,2t-2k)$$\cup$$\pi_{pde}(2t-k)$ is a bijection, i.e.  $B_0(k+1,2t+1)+B_1(k,2t)=B_1(k,2t-2k)+p_{de}(2t-k)$.

\end{proof}
\begin{lemma}\label{lem2}
  For  positive integers $k$, we have
  \begin{align*}
    B_0(k+1,2t)+B_1(k,2t-1)=B_1(k,2t-2k-1)+p_{de}(2t-k-1)+p_{do}(2t-2k-1).
  \end{align*}
\end{lemma}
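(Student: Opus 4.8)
The plan is to imitate the bijective proof of Lemma~\ref{lem1}, constructing an explicit map
\[
\phi_2\colon\ \pi_{B0}(k+1,2t)\cup\pi_{B1}(k,2t-1)\ \longrightarrow\ \pi_{B1}(k,2t-2k-1)\cup\pi_{pde}(2t-k-1)\cup\pi_{pdo}(2t-2k-1)
\]
together with its inverse $\psi_2$. The essential new feature is that the target is a union of \emph{three} sets: the extra piece $\pi_{pdo}(2t-2k-1)$ will absorb exactly those source partitions whose smallest part equals $2$, where the reduction used in Lemma~\ref{lem1} degenerates. First I would record the parity constraints forced by the incongruence-modulo-$2$ condition on the parts exceeding $s$: a parity count on the total weight shows that in $\pi_{B0}(k+1,2t)$ and in $\pi_{B1}(k,2t-1)$ the smallest part $s$ may be even for every $k$, whereas $s$ odd forces $k$ odd; the same bookkeeping gives $p_{de}(2t-k-1)=0$ unless $k$ is odd and shows that every partition in $\pi_{pdo}(2t-2k-1)$ has odd length. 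Hence, when $k$ is even the cases below involving odd $s$ are vacuous and the $p_{de}$-term vanishes, while the surviving even-$s$ cases still match up; the bijection covers both parities of $k$ uniformly, with empty cases causing no trouble.

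I would then define $\phi_2$ by cases. On $\pi_{B0}(k+1,2t)$: if $s(\lambda)=1$, delete one $1$ from each of the $k+1$ copies, landing in $\pi_{pde}(2t-k-1)$ with $\ell(\mu)$ even; if $s(\lambda)\ge 3$, subtract $2$ from each of the first $k$ copies and $1$ from the last copy, producing $\mu\in\pi_{B1}(k,2t-2k-1)$ with $ss(\mu)=s(\mu)+1$; and if $s(\lambda)=2$, this very same operation deletes the first $k$ copies and turns the last copy into a $1$, so $\mu$ becomes a partition into distinct odd parts that contains $1$, that is $\mu\in\pi_{pdo}(2t-2k-1)$. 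On $\pi_{B1}(k,2t-1)$: if $s(\lambda)=1$, delete one $1$ from each of the $k$ copies, landing in $\pi_{pde}(2t-k-1)$ with $\ell(\mu)$ odd; if $s(\lambda)\ge 3$, subtract $2$ from each of the $k$ copies, producing $\mu\in\pi_{B1}(k,2t-2k-1)$ with $ss(\mu)>s(\mu)+1$; and if $s(\lambda)=2$, the same subtraction deletes all $k$ copies, leaving $\mu\in\pi_{pdo}(2t-2k-1)$ with smallest part $\ge 3$. In each case a direct weight count confirms the size of $\mu$, and tracking the parity of the number of surviving large parts confirms the claimed target set.

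The inverse $\psi_2$ is then forced by reading these images backwards, and this is where the real work lies. A partition in $\pi_{pde}(2t-k-1)$ is sent to $\pi_{B0}(k+1,2t)$ or $\pi_{B1}(k,2t-1)$ according as its length is even or odd, by adjoining $k+1$ or $k$ ones; a partition in $\pi_{pdo}(2t-2k-1)$ is sent back according as it does or does not contain the part $1$, by deleting the $1$ and adjoining $k+1$ twos, or by adjoining $k$ twos; and a partition in $\pi_{B1}(k,2t-2k-1)$ is sent back according as $ss(\mu)=s(\mu)+1$ or $ss(\mu)>s(\mu)+1$, by adding $2$ to each of the $k$ copies together with $1$ to the second smallest part, or by adding $2$ to each of the $k$ copies. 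The main obstacle is to check that these last two sub-cases genuinely partition $\pi_{B1}(k,2t-2k-1)$ and that lifting creates no collisions: when $ss(\mu)>s(\mu)+1$, adding $2$ to the $k$ copies raises the smallest part to $s(\mu)+2$, and one must know this does not coincide with $ss(\mu)$. This is guaranteed precisely by the incongruence-modulo-$2$ condition, which forces $ss(\mu)\ge s(\mu)+3$ in that sub-case, so the new smallest part stays strictly below $ss(\mu)$. With this point and the analogous degeneracy check at $s=2$ in hand, verifying $\psi_2\circ\phi_2=\mathrm{id}$ and $\phi_2\circ\psi_2=\mathrm{id}$ becomes routine, and the resulting equality of cardinalities is the claimed identity.
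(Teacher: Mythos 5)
Your proposal is correct and follows essentially the same route as the paper: the forward map (removing $1$'s when $s=1$, removing $2$'s from $k$ copies plus a $1$ from the extra copy, with the $s=2$ case degenerating into $\pi_{pdo}(2t-2k-1)$ distinguished by whether $1$ is a part) and the inverse (splitting $\pi_{B1}(k,2t-2k-1)$ by $ss(\mu)=s(\mu)+1$ versus $ss(\mu)>s(\mu)+1$, $\pi_{pde}$ by length parity, $\pi_{pdo}$ by containment of $1$) coincide case by case with the paper's $\phi_2$ and $\psi_2$. Your explicit check that the incongruence condition forces $ss(\mu)\ge s(\mu)+3$ in the second sub-case is a detail the paper leaves implicit, and is worth stating.
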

\begin{proof}

We define $\phi_2$: $\pi_{B0}(k+1,2t)$$\cup$$\pi_{B1}(k,2t-1)$ $\rightarrow$ $\pi_{B1}(k,2t-2k-1)$$\cup$$\pi_{pde}(2t-k-1)$$\cup$$\pi_{pdo}(2t-2k-1)$.
For $\lambda$ $\in$ $\pi_{B0}(k+1,2t)$$\cup$$\pi_{B1}(k,2t-1)$, we construct  $\phi_2(\lambda)=\mu$ as follows.

(1) If $\lambda\in$ $\pi_{B0}(k+1,2t)$, note that $t(\lambda)$ is even. We denote the resulting partition as $\mu$, which satisfies:

    $\quad$(a) If $s(\lambda)=1$, we remove one $``1"$ from each of the $k+1$ repeated parts in $\lambda$. Then
   $\mu\in$ $\pi_{pde}(2t-k-1)$ with even $\ell(\mu)$;

      $\quad$(b) If $s(\lambda)\ge2$, we remove $``2"$ from each of the first $k$ repeated parts and remove one $``1"$ from the last repeated part. Then
 \begin{itemize}
  \item If $s(\lambda)=2$, then $\mu\in$ $\pi_{pdo}(2t-2k-1)$ with $s(\mu)=s(\lambda)-1$;
  \item If $s(\lambda)\ge3$, then $\mu\in$ $\pi_{B1}(k,2t-2k-1)$ with $s(\mu)=s(\lambda)-2$ and $ss(\mu)=s(\lambda)-1$.

\end{itemize}

(2) If $\lambda\in$ $\pi_{B1}(k,2t-1)$, note that $t(\lambda)$ is odd.  We denote the resulting partition as $\mu$, which satisfies:

      $\quad$(a) If $s(\lambda)=1$, we remove one $``1"$ from each of the $k$ repeated parts in $\lambda$. Then
   $\mu\in$ $\pi_{pde}(2t-k-1)$ with odd $\ell(\mu)$;

      $\quad$(b) If $s(\lambda)\ge2$, we remove $``2"$ from each of the $k$ repeated parts. Then
 \begin{itemize}
  \item If $s(\lambda)=2$, then $\mu\in$ $\pi_{pdo}(2t-2k-1)$ with $s(\mu)>s(\lambda)$;
  \item If $s(\lambda)\ge3$, then $\mu\in$ $\pi_{B1}(k,2t-2k-1)$ with $s(\mu)=s(\lambda)-2$ and $ss(\mu)>s(\lambda)$.

\end{itemize}
Thus, $\mu\in$ $\pi_{B1}(k,2t-2k-1)$$\cup$$\pi_{pde}(2t-k-1)$$\cup$$\pi_{pdo}(2t-2k-1)$.

Conversely, we define $\psi_2$: $\pi_{B1}(k,2t-2k-1)$$\cup$$\pi_{pde}(2t-k-1)$$\cup$$\pi_{pdo}(2t-2k-1)$ $\rightarrow$ $\pi_{B0}(k+1,2t)$$\cup$$\pi_{B1}(k,2t-1)$.
 For $\mu$ $\in$ $\pi_{B1}(k,2t-2k-1)$$\cup$$\pi_{pde}(2t-k-1)$$\cup$$\pi_{pdo}(2t-2k-1)$, we construct $\psi_2(\mu)=\lambda$ as follows.

(1) If $\mu\in$ $\pi_{B1}(k,2t-2k-1)$, note that $t(\mu)$ is odd.  We denote the resulting partition as $\lambda$, which satisfies:

$\quad$  a) If $ss(\mu)=s(\mu)+1$, we add one $``2"$ to each of the $k$ repeated parts in $\mu$, and add one $``1"$ to the second smallest part in $\mu$. Then 
  \begin{itemize}
  \item If $s(\mu)$ is odd,
   $\lambda\in$ $\pi_{B0}(k+1,2t)$ with $s(\lambda)=s(\mu)+2 \geq3$ and non repeated even parts;

  \item If $s(\mu)$ is even,
   $\lambda\in$ $\pi_{B0}(k+1,2t)$ with $s(\lambda)=s(\mu)+2 \geq3$ and non repeated odd parts;
\end{itemize}

$\quad$  b) If $ss(\mu)>s(\mu)+1$, we add one $``2"$ to each of the $k$ repeated parts in $\mu$. Then 
  \begin{itemize}
  \item If $s(\mu)$ is odd,
   $\lambda\in$ $\pi_{B1}(k,2t-1)$ with $s(\lambda)=s(\mu)+2, ss(\lambda)>s(\mu)+1$ and non repeated even parts;

  \item If $s(\mu)$ is even,
   $\lambda\in$ $\pi_{B1}(k,2t-1)$ with $s(\lambda)=s(\mu)+2, ss(\lambda)>s(\mu)+1$ and non repeated odd parts;
\end{itemize}

(2) If $\mu\in$ $\pi_{pde}(2t-k-1)$, note that $s(\mu)$ is even. We denote the resulting partition as $\lambda$, which satisfies:
  \begin{itemize}
  \item If $\ell(\mu)$ is even, we add $k+1$ ones as parts in $\mu$. Then
   $\lambda\in$ $\pi_{B0}(k+1,2t)$  with  $s(\lambda)=1$ and non repeated even parts;

  \item If $\ell(\mu)$ is odd, we add $k$ ones as parts in $\mu$. Then
   $\lambda\in$ $\pi_{B1}(k,2t-1)$  with  $s(\lambda)=1$ and non repeated even parts;
\end{itemize}

(3) If $\mu\in$ $\pi_{pdo}(2t-2k-1)$, note that $l(\mu)$ and $s(\mu)$  are both odd. We denote the resulting partition as $\lambda$, which satisfies:
  \begin{itemize}
  \item If $s(\mu)=1$, we add a $``1"$ to the smallest part and add $k$ twos as parts in $\mu$. Then
   $\lambda\in$ $\pi_{B0}(k+1,2t)$  with  $s(\lambda)=2$ and non repeated odd parts;

  \item If $s(\mu)>1$, we add $k$ twos as parts in $\mu$. Then
   $\lambda\in$ $\pi_{B1}(k,2t-1)$  with  $s(\lambda)=2$ and non repeated odd parts.
\end{itemize}
Thus, $\lambda\in$ $\pi_{B0}(k+1,2t)$$\cup$$\pi_{B1}(k,2t-1)$.

Then we have $\psi_2\circ \phi_2(\lambda)=\lambda$ and $\phi_2\circ \psi_2(\mu)=\mu$, i.e. $\psi_2\circ \phi_2=\phi_2\circ \psi_2=id$. It follows that $\phi_2$: $\pi_{B0}(k+1,2t)$$\cup$$\pi_{B1}(k,2t-1)$ $\rightarrow$ $\pi_{B1}(k,2t-2k-1)$$\cup$$\pi_{pde}(2t-k-1)$$\cup$$\pi_{pdo}(2t-2k-1)$ is a bijection, i.e.  $B_1(k,2t-1)+B_0(k+1,2t)=B_1(k,2t-2k-1)+p_{de}(2t-k-1)+p_{do}(2t-2k-1)$.

\end{proof}

\begin{lemma}\label{lem3}
  For positive integers $k$, we have
  \begin{align*}
   B_0(k,2t-1)+B_1(k+1,2t)=B_0(k,2t-2k-1)+p_{de}(2t-k-1).
  \end{align*}
\end{lemma}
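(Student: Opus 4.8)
The plan is to prove the identity by exhibiting an explicit bijection
$$\phi_3\colon \pi_{B0}(k,2t-1)\cup\pi_{B1}(k+1,2t)\longrightarrow \pi_{B0}(k,2t-2k-1)\cup\pi_{pde}(2t-k-1),$$
in the same spirit as the maps $\phi_1,\phi_2$ of Lemmas \ref{lem1} and \ref{lem2}. Before constructing it, I would run the parity bookkeeping that underlies all three lemmas. For a partition counted by $B_0$ or $B_1$ the remaining parts are congruent to $s(\pi)+1$ modulo $2$, so the sum satisfies $n\equiv k\,s(\pi)+t(\pi)\bigl(s(\pi)+1\bigr)\pmod 2$. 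Since the parity of $t(\pi)$ is fixed ($B_0$ forces it even, $B_1$ forces it odd), this shows that each of the quantities $B_0(k,2t-1)$, $B_1(k+1,2t)$ and $B_0(k,2t-2k-1)$ has a definite forced parity of $n$; when $k$ is even this contradicts the actual parity of $n$, so all three vanish, and $p_{de}(2t-k-1)$ vanishes as well because $2t-k-1$ is then odd and cannot be a sum of distinct even parts. Hence the identity is trivial for $k$ even, and I may assume $k$ is odd. The same congruence then shows that every partition on the left-hand side has $s(\pi)$ odd, so all of its remaining parts are even.

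With $k$ odd fixed, I would define $\phi_3$ by the usual dichotomy ``smallest part equals $1$'' versus ``smallest part $\ge 3$''. For $\lambda\in\pi_{B0}(k,2t-1)$: if $s(\lambda)=1$, strip the $k$ ones to obtain a partition into distinct even parts of $2t-k-1$ whose length equals $t(\lambda)$ and is therefore even; if $s(\lambda)\ge 3$, subtract $2$ from each of the $k$ copies of $s(\lambda)$, landing in $\pi_{B0}(k,2t-2k-1)$ with $s(\mu)=s(\lambda)-2$ and $ss(\mu)>s(\mu)+1$ (the gap survives because the even remaining parts lie at least $s(\lambda)+1$, which is $s(\mu)+3$). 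For $\lambda\in\pi_{B1}(k+1,2t)$, where $k+1$ is even: if $s(\lambda)=1$, strip the $k+1$ ones to get a distinct-even partition of $2t-k-1$ of odd length $t(\lambda)$; if $s(\lambda)\ge 3$, subtract $2$ from each of the first $k$ copies and $1$ from the last copy, creating a new even part $s(\lambda)-1$ just above the new smallest part $s(\mu)=s(\lambda)-2$. The crucial point is that this last move turns the odd count $t(\lambda)$ into an even count, since the freed part $s(\lambda)-1$ now counts among the parts exceeding $s(\mu)$; the image therefore lies in $\pi_{B0}(k,2t-2k-1)$ and is distinguished by $ss(\mu)=s(\mu)+1$.

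The inverse $\psi_3$ is then forced. On $\pi_{B0}(k,2t-2k-1)$ I read off which preimage to use from the value of the second-smallest part: if $ss(\mu)=s(\mu)+1$ I reverse the $\pi_{B1}(k+1,2t)$ move (add $2$ to each of the $k$ smallest parts and $1$ to the second-smallest part), while if $ss(\mu)>s(\mu)+1$ I reverse the $\pi_{B0}(k,2t-1)$ move (add $2$ to each smallest part). On $\pi_{pde}(2t-k-1)$ I use the parity of the length, appending $k$ ones when the length is even and $k+1$ ones when it is odd. I would then verify $\psi_3\circ\phi_3=\mathrm{id}$ and $\phi_3\circ\psi_3=\mathrm{id}$ exactly as in the previous lemmas and conclude equality of cardinalities.

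I expect the main obstacle to be getting the case distinction on $\pi_{B0}(k,2t-2k-1)$ exactly right, so that the two preimage types partition this set cleanly. In particular I must handle the boundary situation where $\mu$ has no second-smallest part at all, i.e.\ $t(\mu)=0$ and $\mu$ consists of $k$ equal parts: this case has to be folded into the ``$ss(\mu)>s(\mu)+1$'' branch and sent back to $\pi_{B0}(k,2t-1)$, and I should confirm that the $\pi_{B1}(k+1,2t)$ branch, which always manufactures a genuine part $s(\lambda)-1$, can never produce it. The remaining checks—that subtracting $2$ keeps the smallest part odd and distinct from the even remaining parts, that the created part $s(\lambda)-1$ is distinct from every other part and correctly flips the parity of $t$, and that the length-parity split of $\pi_{pde}(2t-k-1)$ matches the two source sets—are routine but must all be carried out to confirm that $\phi_3$ is well defined.
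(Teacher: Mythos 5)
Your proposal is correct and follows essentially the same route as the paper: the identical bijection (strip $k$ or $k+1$ ones when $s(\lambda)=1$; otherwise subtract $2$ from each repeated part, peeling off an extra $1$ in the $\pi_{B1}(k+1,2t)$ case to create the marker part $s(\lambda)-1$), with the inverse read off from $ss(\mu)-s(\mu)$ and the length parity. Your explicit congruence argument for why everything vanishes when $k$ is even, and your attention to the $t(\mu)=0$ boundary case, are details the paper only asserts or leaves implicit, but they do not change the argument.
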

\begin{proof}
When $k$ is even, the sets $\pi_{B0}(k,2t-1)$, $\pi_{B1}(k+1,2t)$, $\pi_{B0}(k,2t-2k-1)$ and $\pi_{pde}(2t-k-1)$  are all empty. Therefore, we only need consider the case when $k$ is odd.

We define $\phi_3$: $\pi_{B0}(k,2t-1)$$\cup$$\pi_{B1}(k+1,2t)$ $\rightarrow$ $\pi_{B0}(k,2t-2k-1)$$\cup$$\pi_{pde}(2t-k-1)$.
For $\lambda$ $\in$ $\pi_{B0}(k,2t-1)$$\cup$$\pi_{B1}(k+1,2t)$, we construct  $\phi(\lambda)=\mu$ as follows.

(1) If $\lambda\in$ $\pi_{B0}(k,2t-1)$, note that $t(\lambda)$ is even and $s(\lambda)$ is odd.  We denote the resulting partition as $\mu$, which satisfies:
  \begin{itemize}
  \item If $s(\lambda)=1$, we remove one $``1"$ from each of the $k$ repeated parts in $\lambda$. Then
   $\mu\in$ $\pi_{pde}(2t-k-1)$ with even $\ell(\mu)$;

  \item If $s(\lambda)>1$, we remove $``2"$ from each of the $k$ repeated parts. Then
      $\mu\in$ $\pi_{B0}(k,2t-2k-1)$ with $s(\mu)=s(\lambda)-2$ and $ss(\mu)>s(\lambda)$.
\end{itemize}

(2) If $\lambda\in$ $\pi_{B1}(k+1,2t)$, note that $t(\lambda)$  and $s(\lambda)$  are both odd.  We denote the resulting partition as $\mu$, which satisfies:
  \begin{itemize}
  \item If $s(\lambda)=1$, we remove one $``1"$ from each of the $k+1$ repeated parts in $\lambda$. Then
   $\mu\in$ $\pi_{pde}(2t-k-1)$ with odd $\ell(\mu)$;

  \item If $s(\lambda)>1$, we remove one $``2"$ from each of the first $k$ repeated parts and remove a one from the last repeated part. Then
      $\mu\in$ $\pi_{B0}(k,2t-2k-1)$ with $s(\mu)=s(\lambda)-2$ and $ss(\mu)=s(\lambda)-1$.
\end{itemize}
Thus, $\mu\in$ $\pi_{B0}(k,2t-2k-1)$$\cup$$\pi_{pde}(2t-k-1)$.

Conversely, we define $\psi_3$:  $\pi_{B0}(k,2t-2k-1)$ $\cup$ $\pi_{pde}(2t-k-1)$ $\rightarrow$ $\pi_{B0}(k,2t-1)$$\cup$$\pi_{B1}(k+1,2t)$. For $\mu$ $\in$ $\pi_{B0}(k,2t-2k-1)$$\cup$$\pi_{pde}(2t-k-1)$, we construct $\psi_3(\mu)=\lambda$ as follows.

(1) If $\mu\in$ $\pi_{B0}(k,2t-2k-1)$, note that $t(\mu)$ is even. We denote the resulting partition as $\lambda$, which satisfies:
  \begin{itemize}
  \item If $ss(\mu)=s(\mu)+1$, we add one $``2"$ to each of the $k$ repeated parts in $\mu$, and add one $``1"$ to the second smallest part. Then
   $\lambda\in$ $\pi_{B1}(k+1,2t)$ with $s(\lambda)=s(\mu)+2 \geq3$;

  \item If $ss(\mu)>s(\mu)+1$, we add one $``2"$ to each of the  $k$ repeated parts. Then
      $\lambda\in$ $\pi_{B0}(k,2t-1)$ with $s(\lambda)=s(\mu)+2 \geq3$.
\end{itemize}

(2) If $\mu\in$ $\pi_{pde}(2t-k-1)$, note that $s(\mu)$ is even.  We denote the resulting partition as $\lambda$, which satisfies:
  \begin{itemize}
  \item If $\ell(\mu)$ is even, we add $k$ ones as parts in $\mu$. Then
   $\lambda\in$ $\pi_{B0}(k,2t-1)$  with  $s(\lambda)=1$;

  \item If $\ell(\mu)$ is odd, we add $k+1$ ones as parts in $\mu$. Then
   $\lambda\in$ $\pi_{B1}(k+1,2t)$  with  $s(\lambda)=1$.
\end{itemize}
Thus, $\lambda\in$ $\pi_{B0}(k,2t-1)$$\cup$$\pi_{B1}(k+1,2t)$.

Then we have $\psi_3\circ \phi_3(\lambda)=\lambda$ and $\phi_3\circ \psi_3(\mu)=\mu$, i.e. $\psi_3\circ \phi_3=\phi_3\circ \psi_3=id$. It follows that $\phi_3$: $\pi_{B0}(k,2t-1)$$\cup$$\pi_{B1}(k+1,2t)$ $\rightarrow$ $\pi_{B0}(k,2t-2k-1)$$\cup$$\pi_{pde}(2t-k-1)$ is a bijection, i.e.  $B_0(k,2t-1)+B_1(k+1,2t)=B_0(k,2t-2k-1)+p_{de}(2t-k-1)$.

\end{proof}
\begin{lemma}\label{lem4}
  For positive integers $k$, we have
  \begin{align*}
    B_0(k,2t)+B_1(k+1,2t+1)=B_0(k,2t-2k)+p_{de}(2t-k)+p_{do}(2t-2k).
  \end{align*}
\end{lemma}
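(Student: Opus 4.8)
The plan is to reproduce the bijective scheme of Lemma~\ref{lem2}, but with the roles of $B_0$ and $B_1$ interchanged, since on the left side of Lemma~\ref{lem4} the $(k+1)$-fold repetition sits on the $B_1$-term rather than on the $B_0$-term. Explicitly, I would construct a map
$$\phi_4:\ \pi_{B0}(k,2t)\cup\pi_{B1}(k+1,2t+1)\ \longrightarrow\ \pi_{B0}(k,2t-2k)\cup\pi_{pde}(2t-k)\cup\pi_{pdo}(2t-2k)$$
together with an inverse $\psi_4$, and then check $\psi_4\circ\phi_4=\phi_4\circ\psi_4=\mathrm{id}$. The weight bookkeeping already points to the three targets: deleting the $k$ (resp. $k+1$) copies of the part $1$ lowers the weight $2t$ (resp. $2t+1$) to $2t-k$, matching $\pi_{pde}(2t-k)$, whereas stripping $2$ from each of $k$ copies (resp. $2$ from the first $k$ copies and $1$ from the last) lowers $2t$ (resp. $2t+1$) to $2t-2k$, matching both $\pi_{B0}(k,2t-2k)$ and $\pi_{pdo}(2t-2k)$.

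For the forward map I would split on the summand and on $s(\lambda)$. If $\lambda\in\pi_{B0}(k,2t)$, so that $t(\lambda)$ is even, then for $s(\lambda)=1$ I delete one ``$1$'' from each of the $k$ copies and land in $\pi_{pde}(2t-k)$ with $\ell(\mu)$ even, while for $s(\lambda)\ge2$ I subtract $2$ from each copy; this deletes all $k$ copies when $s(\lambda)=2$ (landing in $\pi_{pdo}(2t-2k)$ with $s(\mu)>1$) and otherwise leaves $k$ copies of $s(\lambda)-2$ with the larger parts untouched, so $t$ is preserved and $\mu\in\pi_{B0}(k,2t-2k)$. If $\lambda\in\pi_{B1}(k+1,2t+1)$, so that $t(\lambda)$ is odd, then for $s(\lambda)=1$ I delete one ``$1$'' from each of the $k+1$ copies and land in $\pi_{pde}(2t-k)$ with $\ell(\mu)$ odd, while for $s(\lambda)\ge2$ I subtract $2$ from the first $k$ copies and $1$ from the last; when $s(\lambda)=2$ this leaves a single ``$1$'' and lands in $\pi_{pdo}(2t-2k)$ with $s(\mu)=1$, and when $s(\lambda)\ge3$ it creates a new part $s(\lambda)-1$ lying strictly above the new smallest part $s(\lambda)-2$, raising $t$ by one from odd to even, so that $\mu\in\pi_{B0}(k,2t-2k)$ with $ss(\mu)=s(\mu)+1$. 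The inverse $\psi_4$ reverses each branch: on $\pi_{B0}(k,2t-2k)$ I add $2$ to each of the $k$ smallest parts, additionally adding a ``$1$'' to $ss(\mu)$ exactly when $ss(\mu)=s(\mu)+1$ so as to recreate the $(k+1)$st copy and send $\mu$ into the $B_1$-term; on $\pi_{pde}(2t-k)$ I append $k$ or $k+1$ ones according as $\ell(\mu)$ is even or odd; and on $\pi_{pdo}(2t-2k)$ I append $k$ twos, first promoting the existing ``$1$'' to a ``$2$'' when $s(\mu)=1$.

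The step I expect to require the most care is the parity bookkeeping that decides $B_0$ versus $B_1$ membership, together with the length parity recorded in the $\pi_{pde}$ branch. The one move that alters $t(\pi)$ is ``subtract $2$ from the first $k$ copies and $1$ from the last,'' which inserts exactly one new part just below the former minimum and hence flips the parity of $t$; every other move preserves $t$, and deleting all copies of the part $1$ transfers the count $t(\lambda)$ to $\ell(\mu)$. One also invokes the congruences $n\equiv t(\pi)\pmod 2$ when $s(\pi)$ is even and $n\equiv k\pmod 2$ when $s(\pi)$ is odd to see that each source subcase and its target are simultaneously empty or nonempty (for instance $\pi_{pde}(2t-k)$ and the $s=1$ partitions both vanish when $k$ is odd), so the case division is self-consistent. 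The real obstacle is thus not a single hard estimate but the exhaustive verification that $\phi_4$ and $\psi_4$ are well defined and mutually inverse across the boundary cases $s=1$ and $s=2$, where parts are created or destroyed; granting that, equating cardinalities yields $B_0(k,2t)+B_1(k+1,2t+1)=B_0(k,2t-2k)+p_{de}(2t-k)+p_{do}(2t-2k)$.
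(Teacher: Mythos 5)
Your proposal is correct and follows essentially the same route as the paper: the identical case split on $s(\lambda)\in\{1\}$, $\{2\}$, $\{\ge 3\}$, the same ``subtract $2$ from the first $k$ copies and $1$ from the last'' move that flips the parity of $t(\lambda)$ and creates the marker $ss(\mu)=s(\mu)+1$, and the same inverse branches (appending $k$ or $k+1$ ones according to $\ell(\mu)$, and promoting the ``$1$'' to a ``$2$'' before appending $k$ twos). The parity bookkeeping you flag as the delicate point is handled in the paper exactly as you describe.
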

\begin{proof}
We define $\phi_4$: $\pi_{B0}(k,2t)$$\cup$$\pi_{B1}(k+1,2t+1)$ $\rightarrow$ $\pi_{B0}(k,2t-2k)$$\cup$$\pi_{pde}(2t-k)$$\cup$$\pi_{pdo}(2t-2k)$.
For $\lambda$ $\in$ $\pi_{B0}(k,2t)$$\cup$$\pi_{B1}(k+1,2t+1)$, we construct $\phi(\lambda)=\mu$  as follows.

(1) If $\lambda\in$ $\pi_{B0}(k,2t)$, note that $t(\lambda)$ is even.  We denote the resulting partition as $\mu$, which satisfies:

    $\quad$(a) If $s(\lambda)=1$, we remove one $``1"$ from each of the $k$ repeated parts in $\lambda$. Then
   $\mu\in$ $\pi_{pde}(2t-k)$ with even $\ell(\mu)$;

      $\quad$(b) If $s(\lambda)\ge2$, we remove $``2"$ from each of the $k$ repeated parts. Then
 \begin{itemize}
  \item If $s(\lambda)=2$, then $\mu\in$ $\pi_{pdo}(2t-2k)$ with $s(\mu)>2$;
  \item If $s(\lambda)\ge3$, then $\mu\in$ $\pi_{B0}(k,2t-2k)$ with $s(\mu)=s(\lambda)-2$ and $ss(\mu)> s(\lambda)$.
\end{itemize}

(2) If $\lambda\in$ $\pi_{B1}(k+1,2t+1)$, note that $t(\lambda)$ is odd. We denote the resulting partition as $\mu$, which satisfies:

      $\quad$ a) If $s(\lambda)=1$, we remove one $``1"$ from each of the $k+1$ repeated parts in $\lambda$. Then
   $\mu\in$ $\pi_{pde}(2t-k)$ with odd $\ell(\mu)$;

      $\quad$ b) If $s(\lambda)\ge2$, we remove $``2"$ from each of the first $k$ repeated parts and a $``1"$ from the last repeated part. Then
 \begin{itemize}
  \item If $s(\lambda)=2$, then $\mu\in$ $\pi_{pdo}(2t-2k)$ with $s(\mu)=1$;
  \item If $s(\lambda)\ge3$, then $\mu\in$ $\pi_{B0}(k,2t-2k)$ with $s(\mu)=s(\lambda)-2$ and $ss(\mu)=s(\lambda)-1$.
\end{itemize}
Thus, $\mu\in$ $\pi_{B0}(k,2t-2k)$$\cup$$\pi_{pde}(2t-k)$$\cup$$\pi_{pdo}(2t-2k)$.

Conversely, we define $\psi_4$:  $\pi_{B0}(k,2t-2k)$$\cup$$\pi_{pde}(2t-k)$$\cup$$\pi_{pdo}(2t-2k)$ $\rightarrow$ $\pi_{B0}(k,2t)$$\cup$$\pi_{B1}(k+1,2t+1)$. For $\mu$ $\in$ $\pi_{B0}(k,2t-2k)$$\cup$$\pi_{pde}(2t-k)$$\cup$$\pi_{pdo}(2t-2k)$,  we construct $\psi_4(\mu)=\lambda$ as follows.

(1) If $\mu\in$ $\pi_{B0}(k,2t-2k)$, note that $t(\mu)$ is even.  We denote the resulting partition as $\lambda$, which satisfies:

$\quad$  a) If $ss(\mu)=s(\mu)+1$, we add one $``2"$ to each of the $k$ repeated parts in $\mu$, and add one $"1"$ to the second smallest part. Then 
  \begin{itemize}
  \item If $s(\mu)$ is odd,
   $\lambda\in$ $\pi_{B1}(k+1,2t+1)$ with $s(\lambda)=s(\mu)+2$ and $ss(\lambda)>s(\mu)+1$ and non repeated even parts;

  \item if $s(\mu)$ is even,
   $\lambda\in$ $\pi_{B1}(k+1,2t+1)$ with $s(\lambda)=s(\mu)+2$ and $ss(\lambda)>s(\mu)+1$ and non repeated odd parts;
\end{itemize}

$\quad$  b) If $ss(\mu)>s(\mu)+1$, we add one $``2"$ to each of the $k$ repeated parts in $\mu$.
Then  
  \begin{itemize}
  \item If $s(\mu)$ is odd,
   $\lambda\in$ $\pi_{B0}(k,2t)$ with $s(\lambda)=s(\mu)+2$ and $ss(\lambda)>s(\mu)+1$ and non repeated even parts;

  \item If $s(\mu)$ is even,
   $\lambda\in$ $\pi_{B0}(k,2t)$ with $s(\lambda)=s(\mu)+2$ and  $ss(\lambda)>s(\mu)+1$ and non repeated odd parts;
\end{itemize}

(2) If $\mu\in$ $\pi_{pde}(2t-k)$, note that $s(\mu)$ is even. We denote the resulting partition as $\lambda$, which satisfies:
  \begin{itemize}
  \item If $\ell(\mu)$ is even, we add $k$ ones as parts in $\mu$. Then
   $\lambda\in$ $\pi_{B0}(k,2t)$  with  $s(\lambda)=1$ and non repeated even parts;

  \item If $\ell(\mu)$ is odd, we add $k+1$ ones as parts in $\mu$. Then
   $\lambda\in$ $\pi_{B1}(k+1,2t+1)$  with  $s(\lambda)=1$ and non repeated even parts.
\end{itemize}

(3) If $\mu\in$ $\pi_{pdo}(2t-2k)$, note that $\ell(\mu)$ is even and $s(\mu)$ is odd. We denote the resulting partition as $\lambda$, which satisfies:
  \begin{itemize}
  \item If $s(\mu)=1$, we add a $``1"$ to the smallest part and add $k$ twos as parts in $\mu$. Then
   $\lambda\in$ $\pi_{B1}(k+1,2t+1)$  with  $s(\lambda)=2$ and non repeated odd parts;

  \item If $s(\mu)>1$, we add $k$ twos as parts in $\mu$. Then
   $\lambda\in$ $\pi_{B0}(k,2t)$  with  $s(\lambda)=2$ and non repeated odd parts.
\end{itemize}
Thus, $\lambda\in$ $\pi_{B0}(k,2t)$$\cup$$\pi_{B1}(k+1,2t+1)$.

Then we have $\psi_4\circ \phi_4(\lambda)=\lambda$ and $\phi_4\circ \psi_4(\mu)=\mu$, i.e. $\psi_4\circ \phi_4=\phi_4\circ \psi_4=id$. It follows that $\phi_4$: $\pi_{B0}(k,2t)$$\cup$$\pi_{B1}(k+1,2t+1)$ $\rightarrow$ $\pi_{B0}(k,2t-2k)$$\cup$$\pi_{pde}(2t-k)$$\cup$$\pi_{pdo}(2t-2k)$ is a bijection, i.e.  $B_0(k,2t)+B_1(k+1,2t+1)=B_0(k,2t-2k)+p_{de}(2t-k)+p_{do}(2t-2k)$.
\end{proof}

With the same maps in Lemmas \ref{lem1}--\ref{lem4}, and noting that the sets $\pi_{B1}(1,2t)$ and $\pi_{pde}(2t-1)$ are both empty, we obtain
\begin{lemma}\label{lem5}
For positive integer $t$, we have
\begin{align*}
&B_0(1,2t+1)=p_{de}(2t),\\
&B_0(1,2t)=p_{de}(2t-1)+p_{do}(2t-1),\\
&B_1(1,2t)=p_{de}(2t-1),\\
&B_1(1,2t+1)=p_{de}(2t)+p_{do}(2t).
\end{align*}
\end{lemma}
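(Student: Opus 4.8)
The plan is to prove all four identities by a single bijection that reuses the elementary moves of Lemmas \ref{lem1}--\ref{lem4}, applied at the base level where the smallest part occurs only once, so that removing it leaves an ordinary distinct-part partition rather than another $\pi_{B0}$ or $\pi_{B1}$ set and the recursive terms of those lemmas are absent. The surviving move is: from $\lambda\in\pi_{B0}(1,n)\cup\pi_{B1}(1,n)$, remove a ``$1$'' from its unique smallest part $s(\lambda)$, deleting that part entirely when $s(\lambda)=1$. Since every remaining part has parity opposite to $s(\lambda)$, and $s(\lambda)-1$ again has that opposite parity, the image $\mu$ is a partition into distinct parts all of one fixed parity; thus $\mu\in\pi_{pde}(n-1)$ when $s(\lambda)$ is odd and $\mu\in\pi_{pdo}(n-1)$ when $s(\lambda)$ is even.

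First I would record the parity bookkeeping that pins down which values of $s(\lambda)$ occur. Writing $r=t(\lambda)$ for the number of parts exceeding $s(\lambda)$, each such part is $\equiv s(\lambda)+1\pmod 2$, so $n\equiv s(\lambda)+r\,(s(\lambda)+1)\pmod 2$. For $\pi_{B0}$ we have $r$ even and for $\pi_{B1}$ we have $r$ odd; feeding in $n=2t$ or $n=2t+1$ then forces $s(\lambda)$ odd in $\pi_{B0}(1,2t+1)$, $s(\lambda)$ even in $\pi_{B0}(1,2t)$, both parities allowed in $\pi_{B1}(1,2t+1)$, and no admissible $\lambda$ at all in $\pi_{B1}(1,2t)$. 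This last fact is exactly the emptiness of $\pi_{B1}(1,2t)$; together with $p_{de}(2t-1)=0$ (distinct even parts cannot sum to an odd number) it disposes of the identity $B_1(1,2t)=p_{de}(2t-1)$ immediately, both sides being $0$.

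For the remaining three identities I would check that the move above is a bijection onto the stated target, using $\ell(\mu)$ to invert it. When $s(\lambda)=1$ the part is deleted and $\ell(\mu)=r$; when $s(\lambda)\ge 2$ it is merely decreased and $\ell(\mu)=r+1$, so the parity of $\ell(\mu)$ records whether $s(\lambda)=1$ or $s(\lambda)>1$ and the inverse is unambiguous: prepend a ``$1$'' when $\ell(\mu)$ has the parity coming from the $s(\lambda)=1$ branch, and otherwise add $1$ back to the smallest part of $\mu$. Tracing the three cases, $\pi_{B0}(1,2t+1)$ maps bijectively onto all of $\pi_{pde}(2t)$, $\pi_{B0}(1,2t)$ onto all of $\pi_{pdo}(2t-1)$, and $\pi_{B1}(1,2t+1)$ onto $\pi_{pde}(2t)\cup\pi_{pdo}(2t)$ (the odd-$s$ elements giving $\pi_{pde}(2t)$ and the even-$s$ elements giving $\pi_{pdo}(2t)$); inserting $p_{de}(2t-1)=0$ turns these into the three remaining claimed identities.

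The main obstacle is the surjectivity/consistency check: that both parities of $\ell(\mu)$ are realised exactly once in each target and that the targets contain no elements of the ``wrong'' length. Here I would use the elementary fact that a partition of $m$ into distinct odd parts has a number of parts congruent to $m\pmod 2$. This guarantees that $\pi_{pdo}(2t-1)$ consists solely of odd-length partitions, matching the single $s(\lambda)\ge 2$ branch of $\pi_{B0}(1,2t)$, and that $\pi_{pdo}(2t)$ consists solely of even-length partitions, matching the constraint that $r$ is odd for $\pi_{B1}$, so no separate $s(\lambda)=1$ branch is needed or available on the odd-part side. Confirming that the two branches $s(\lambda)=1$ and $s(\lambda)>1$ cover the even- and odd-length classes of $\pi_{pde}$ without overlap, and that the whole construction coincides with the maps $\phi_i,\psi_i$ of Lemmas \ref{lem1}--\ref{lem4} read at this base level, then completes the proof.
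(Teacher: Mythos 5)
Your proposal is correct and takes essentially the same route as the paper, which proves Lemma \ref{lem5} in one line by reading the maps of Lemmas \ref{lem1}--\ref{lem4} at the base level together with the emptiness of $\pi_{B1}(1,2t)$ and $\pi_{pde}(2t-1)$; your write-up simply makes explicit the single surviving move (remove a ``$1$'' from the unique smallest part) and the parity bookkeeping that the paper leaves implicit.
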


\begin{theorem}\label{thm2-2}

(a) For $k\geq 2$, we have
\begin{align*}
&spt(k-1)'_{do}(n-1)-sptk'_{do}(n)=spt(k-1)'_{do}(n-2k+1)+p'_{do}(n-2k+1);\\
&sptk_{do}(n)+spt(k-1)_{do}(n-1)=spt(k-1)_{do}(n-2k+1)+2p_{de}(n-k)+p_{do}(n-2k+1).
\end{align*}
(b) For $n\geq 2$, we have
\begin{align*}
&spt1_{do}(n)=2p_{de}(n-1)+p_{do}(n-1),\\
&spt1'_{do}(n)=-p'_{do}(n-1).
\end{align*}
\end{theorem}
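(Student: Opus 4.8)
The plan is to derive both parts entirely by taking suitable linear combinations of Lemmas \ref{lem1}--\ref{lem5}, organized by the parity of $n$. The two guiding observations are $sptk_{do}(n) = B_0(k,n) + B_1(k,n)$ and $sptk'_{do}(n) = B_0(k,n) - B_1(k,n)$, so the \emph{total-count} identity should arise from \emph{adding} a matched pair of lemmas, while the \emph{signed-count} identity should arise from \emph{subtracting} the same pair. In each lemma I would first replace $k$ by $k-1$, turning the indices $B(k+1,\cdot),B(k,\cdot)$ appearing there into $B(k,\cdot),B(k-1,\cdot)$, which matches the $sptk$ and $spt(k-1)$ on the theorem's left-hand sides; this is legitimate for $k \geq 2$, and since each lemma is an honest identity (both sides vanish in the degenerate parity), no parity restriction on $k$ is needed.

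Next I would record the elementary bridge between $p_{do}$ and $p'_{do}$. A partition of $m$ into distinct odd parts has length $\ell \equiv m \pmod 2$, so its sign $(-1)^{\ell}$ equals $(-1)^{m}$; summing over all such partitions and comparing \eqref{r:q2} with \eqref{r:q3} gives $p'_{do}(m) = p_{do}(m)$ for even $m$ and $p'_{do}(m) = -p_{do}(m)$ for odd $m$. This is exactly what is needed to reconcile the $p_{do}$ terms produced by the lemmas with the $p'_{do}$ terms demanded by the statement.

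With these in hand the assembly is mechanical. For the second identity of part (a) (total count): when $n = 2t+1$ is odd, adding the $(k \to k-1)$ versions of Lemmas \ref{lem1} and \ref{lem4} collapses the left side to $sptk_{do}(2t+1) + spt(k-1)_{do}(2t)$ and, after rewriting arguments in terms of $n$, the right side to $spt(k-1)_{do}(n-2k+1) + 2p_{de}(n-k) + p_{do}(n-2k+1)$; when $n = 2t$ is even, the same conclusion follows from adding the $(k\to k-1)$ versions of Lemmas \ref{lem2} and \ref{lem3}. For the first identity (signed count) I would instead subtract within the same pairs, namely Lemma \ref{lem4} minus Lemma \ref{lem1} for odd $n$ and Lemma \ref{lem3} minus Lemma \ref{lem2} for even $n$; each difference yields $spt(k-1)'_{do}(n-2k+1) \pm p_{do}(n-2k+1)$, and the bridge (with $n-2k+1$ even in the odd-$n$ case and odd in the even-$n$ case) converts the trailing term to the stated $+\,p'_{do}(n-2k+1)$. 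Part (b) is the base case: substituting the four evaluations of Lemma \ref{lem5} into $B_0(1,n) \pm B_1(1,n)$ and again invoking the bridge gives $spt1_{do}(n) = 2p_{de}(n-1) + p_{do}(n-1)$ and $spt1'_{do}(n) = -p'_{do}(n-1)$ for each parity of $n$.

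The genuinely substantive content is already carried by Lemmas \ref{lem1}--\ref{lem5}, so the remaining difficulty is purely organizational. The one point demanding care is the bookkeeping of the index shifts $2t,\ 2t+1,\ 2t-2k,\ 2t-2k+1,\dots$ back into the uniform variable $n$, so that the four separate parity computations all land on the single pair of formulas in the statement, together with applying the correct branch of $p_{do}(m) = \pm\, p'_{do}(m)$ in each case. I expect no conceptual obstacle beyond this careful matching of arguments and parities.
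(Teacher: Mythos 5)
Your proposal is correct and follows essentially the same route as the paper: the authors likewise specialize Lemmas \ref{lem1} and \ref{lem4} to odd $n$ and Lemmas \ref{lem2} and \ref{lem3} to even $n$, add the pairs to get the $sptk_{do}$ identity and subtract them to get the $sptk'_{do}$ identity (with the shift $k\to k-1$), using the sign relation $p'_{do}(m)=(-1)^{m}p_{do}(m)$ implicitly and Lemma \ref{lem5} for part (b). The only cosmetic difference is that the paper merges the two parities into single formulas via factors $\left(1\pm(-1)^{n-2k}\right)$ before combining, whereas you keep the parity cases separate.
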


\begin{proof}
First, let $n=2t+1$ in Lemmas \ref{lem1} and \ref{lem4}, respectively.  For $k\geq 1$, we have
 \begin{align*}
  &B_0(k+1,n)+B_1(k,n-1)=B_1(k,n-2k-1)+p_{de}(n-k-1);\\
  &B_0(k,n-1)+B_1(k+1,n)=B_0(k,n-2k-1)+p_{de}(n-k-1)+p_{do}(n-2k-1).
  \end{align*}
Next, let $n=2t$ in Lemmas \ref{lem2} and \ref{lem3}, respectively.  For $k\geq 1$, we have
\begin{align*}
  &B_0(k+1,n)+B_1(k,n-1)=B_1(k,n-2k-1)+p_{de}(n-k-1)+p_{do}(n-2k-1);\\
&B_0(k,n-1)+B_1(k,n-1)=B_0(k,n-2k-1)+p_{de}(n-k-1).
\end{align*}
According to the parity of $n$, we derive the following
\begin{align*}
  2B_0(k+1,n)+2B_1(k,n-1)&=2B_1(k,n-2k-1)+2p_{de}(n-k-1)\\
  &+(1+(-1)^{n-2k})p_{do}(n-2k-1);\\
  2B_0(k,n-1)+2B_1(k+1,n)&=2B_0(k,n-2k-1)+2p_{de}(n-k-1)\\
  &+(1+(-1)^{n-2k-1})p_{do}(n-2k-1).
\end{align*}
%
%
%
Combining  the above two equations, we have, for integers $k\geq2$,

\begin{align}
\label{eq-thm2-1}
&B_0(k-1,n-2k+1)-B_1(k-1,n-2k+1)+(-1)^{n-2k+1}p_{do}(n-2k+1)\\
\nonumber
=&B_0(k-1,n-1)-B_1(k-1,n-1)-B_0(k,n)+B_1(k,n),
\end{align}
and
\begin{align}
\label{eq-thm2-2}
&B_0(k,n)+B_1(k,n)+B_0(k-1,n-1)+B_1(k-1,n-1)\\
\nonumber
=&B_0(k-1,n-2k+1)+B_1(k-1,n-2k+1)+2p_{de}(n-k)+p_{do}(n-2k+1).
\end{align}
By Lemma \ref{lem5}, we can prove, for $n\geq 2$,
\begin{align*}
2B_0(1,n)=2p_{de}(n-1)+\left(1+(-1)^n\right)p_{do}(n-1),\\
2B_1(1,n)=2p_{de}(n-1)+\left(1-(-1)^n\right)p_{do}(n-1).
\end{align*}
Recall the definitions
 \begin{align*}
 sptk'_{do}(n)=B_{0}(k,n)-B_{1}(k,n)~~~~~ \text{and} ~~~~~~sptk_{do}(n)=B_{0}(k,n)+B_{1}(k,n).
\end{align*}
This completes the proof of Theorem \ref{thm2-2}.
\end{proof}

Next, we show that Theorem \ref{thm2-2} provides a combinatorial formulation to Theorems \ref{thm2}--\ref{thm3}.
\begin{proof}[Proof of Theorem \ref{thm2}]  By Theorem \ref{thm2-2},  we have
\begin{align*}
\sum_{n\ge1}spt1'_{do}(n)q^n=&q+\sum_{n\ge2}spt1'_{do}(n)q^n
=q-\sum_{n\ge2}p'_{do}(n-1)q^n\\
=&q-\sum_{n\ge1}p'_{do}(n)q^{n+1}=2q-q(q;q^2)_\infty,
\end{align*}
which is the desired identity for $k =1$. Now assume that the result holds for $k-1$. By Theorem \ref{thm2-2} and \eqref{r:q3}, we obtain
\begin{align*}
  &\sum_{n\ge1}sptk'_{do}(n)q^n=\sum_{n= 1}^{2k-2}sptk'_{do}(n)q^n+sptk'_{do}(2k-1)q^{2k-1}+\sum_{n\ge 2k}sptk'_{do}(n)q^n\\
  =&\sum_{n=1}^{2k-2}sptk'_{do}(n)q^n+spt(k-1)'_{do}(2k-2)q^{2k-1}-q^{2k-1}+\sum_{n\ge 2k}sptk'_{do}(n)q^n\\
 =&\left(\sum_{n=1}^{2k-2}sptk'_{do}(n)q^n+spt(k-1)'_{do}(2k-2)q^{2k-1}+\sum_{n\ge 2k}spt(k-1)'_{do}(n-1)q^n\right)\\
 &-\sum_{n\ge 2k}spt(k-1)'_{do}(n-2k+1)q^n
 -\sum_{n\ge 2k}p'_{do}(n-2k+1)q^n-q^{2k-1}\\
 =&\sum_{n\ge 1}spt(k-1)'_{do}(n)q^{n+1}-\sum_{n\ge 1}spt(k-1)'_{do}(n)q^{n+2k-1}-\sum_{n\ge0}p'_{do}(n)q^{n+2k-1}\\
 =&q\left(T_{k-1}(q)(q;q^2)_{\infty}+2q^{k-1}(q^2;q^2)_{k-2}\right)\\
 &-q^{2k-1}\left(T_{k-1}(q)(q;q^2)_{\infty}+2q^{k-1}(q^2;q^2)_{k-2}\right)
 -q^{2k-1}(q;q^2)_\infty\\
 =&\left((q-q^{2k-1})T_{k-1}(q)-q^{2k-1}\right)(q;q^2)_\infty+2q^{k}(q^2;q^2)_{k-1}\\
 =&T_{k}(q)(q;q^2)_{\infty}+2q^{k}(q^2;q^2)_{k-1}.
 \end{align*}
 This completes the proof of Theorem \ref{thm2}.
 \end{proof}

\begin{proof}[Proof of Theorem \ref{thm3}]By Theorem \ref{thm2-2},  we have
\begin{align*}
\sum_{n\ge1}spt1_{do}(n)q^n=&q+\sum_{n\ge2}spt1_{do}(n)q^n
=q+\sum_{n\geq2}(2p_{de}(n-1)+p_{do}(n-1))q^n\\
=&q+\sum_{n\geq1}(2p_{de}(n)+p_{do}(n))q^{n+1}
=-2q+2q(-q^2;q^2)_\infty+q(-q;q^2)_\infty,
\end{align*}
which is the desired identity for $k=1$. Now, assume that the result holds for $k$. By Theorem \ref{thm2-2} and \eqref{r:q1}--\eqref{r:q2}, we obtain
\begin{align*}
  &\sum_{n\ge1}sptk_{do}(n)q^n=\sum_{n=k}^{2k-2}sptk_{do}(n)q^n+sptk_{do}(2k-1)q^{2k-1}+\sum_{n\ge 2k}sptk_{do}(n)q^n\\
  &=\sum_{n=k}^{2k-2}\left(2p_{de}(n-k)-spt(k-1)_{do}(n-1)\right)q^n
  +\sum_{n\ge 2k}sptk_{do}(n)q^n\\
  &\qquad+\left(2p_{de}(k-1)+1-spt(k-1)_{do}(2k-2)\right)q^{2k-1}\\
  &=\sum_{n=k}^{2k-1}\left(2p_{de}(n-k)-spt(k-1)_{do}(n-1)\right)q^n
  +\sum_{n\ge 2k}spt(k-1)_{do}(n-2k+1)q^n\\
  &-\sum_{n\ge 2k}spt(k-1)_{do}(n-1)q^n+2\sum_{n\ge 2k}p_{de}(n-k)q^n+\sum_{n\ge 2k}p_{do}(n-2k+1)q^n+q^{2k-1}\\
  &=\sum_{n\ge 2k}spt(k-1)_{do}(n-2k+1)q^n-\sum_{n\ge k}spt(k-1)_{do}(n-1)q^n\\
  &~~~\qquad~~~~~~~+2\sum_{n\ge k}p_{de}(n-k)q^n+\sum_{n\ge 2k}p_{do}(n-2k+1)q^n+q^{2k-1}\\
  &=\sum_{n\ge 1}spt(k-1)_{do}(n)q^{n+2k-1}-\sum_{n\ge 1}spt(k-1)_{do}(n)q^{n+1}+2q^{k}(-q^2;q^2)_\infty+q^{2k-1}(-q;q^2)_\infty\\
  &=(q^{2k-1}-q)\left(V_{k-1}(q)(-q^2;q^2)_{\infty}+W_{k-1}(q)(-q;q^2)_{\infty}+2(-q)^{k-1}(q^2;q^2)_{k-2}\right)\\
  &~~~\qquad~~~~~~~+2q^{k}(-q^2;q^2)_\infty+q^{2k-1}(-q;q^2)_\infty\\
  &=\left((q^{2k-1}-q)V_{k-1}(q)+2q^{k}\right)(-q^2;q^2)_\infty\\
  &~~~\qquad~~~~~~~+\left((q^{2k-1}-q)W_{k-1}(q)+q^{2k-1}\right)(-q;q^2)_\infty+2(-q)^{k}(q^2;q^2)_{k-1}\\
  &=V_{k}(q)(-q^2;q^2)_{\infty}+W_{k}(q)(-q;q^2)_{\infty}+2(-q)^{k}(q^2;q^2)_{k-1}.
\end{align*}
This completes the proof of Theorem \ref{thm3}.
\end{proof}

\subsection*{Acknowledgements}
The authors were  supported by the National Key R\&D Program of China (Grant No. 2024YFA1014500) and the National Natural Science Foundation of China (Grant No. 12201387 and No. 12401438).






\end{document}